\DeclareFontFamily{OML}{script}{}
\DeclareFontShape{OML}{script}{m}{it}
{ <5-20> rsfs10 }{}
\DeclareMathAlphabet{\mathscript}{OML}{script}{m}{it}
\renewcommand{\mathcal}[1]{{\mathscript #1}\hspace{0.2ex}}
\newcommand{\red}{\color{red}}
\newcommand{\text}{\mbox}
\newcommand{\operatorname}{\mathop}
\newcommand\be{\begin{equation}}
\newcommand\ee{\end{equation}}
\newcommand\bea{\begin{eqnarray}}
\newcommand\eea{\end{eqnarray}}
\newcommand\beaa{\begin{eqnarray*}}
\newcommand\eeaa{\end{eqnarray*}}
\newenvironment{eqa}{\begin{equation}%
  \begin{array}{rcl}}{\end{array}\end{equation}}
\newcommand\beqa{\begin{eqa}}
\newcommand\eeqa{\end{eqa}}
\numberwithin{equation}{section}
\newtheorem{theorem}{Theorem}[section]
\newtheorem{lemma}{Lemma}[section]
\newcommand{\void}[1]{}
\def\R{\mathbb{R}}
\begin{document}

\title[Some monotone properties for solutions to
 a reaction-diffusion model]{Some monotone properties for solutions to
 a reaction-diffusion model}

\author{Rui Li}
\date{\today}
\address{Institute for Mathematical Sciences, Renmin  University of China,
Beijing, 100876, P.R. China}
\email{liruicxis@ruc.edu.cn}
\author{Yuan Lou}
\address{Department of Mathematics, Ohio State University,
Columbus, OH 43210, USA}
\email{lou@math.ohio-state.edu}
\thanks{Corresponding author: Rui Li, liruicxis@ruc.edu.cn}
\thanks{Keywords: Reaction-diffusion; steady state; diffusion rate; monotone property}
\thanks{2010 Mathematics Subject Classification: 34D23, 92D25} 

\begin{abstract}
Motivated by the recent investigation of a predator-prey model in heterogeneous
environments \cite{LouYuan-WangBiao}, we show that the maximum of the unique
positive solution of the scalar equation
 \begin{equation}\label{eq:01}\begin{cases}
\mu\Delta\theta+(m(x)-\theta)\theta=0 \hspace{0.5em} &\text{in}\hspace{0.5em}\Omega,\\
\frac{\partial \theta}{\partial n}=0 \hspace{0.5em} &\text{on}\hspace{0.5em}\partial\Omega
\end{cases}\end{equation}
is a strictly  monotone decreasing function of the diffusion rate $\mu$
for several classes of function $m$, which substantially
improves a result in \cite{LouYuan-WangBiao}.
However,
the minimum of the positive solution of \eqref{eq:01}
is not always  monotone increasing in the diffusion rate \cite{HeXiaoqing-NiWeiMing2016}.
\end{abstract}
\maketitle

\section{\bf Introduction}

Consider the scalar reaction-diffusion equation
\begin{equation}\label{genf}\begin{cases}
u_t=\mu \Delta u+f(x, u)\quad &\mbox{in}\ \Omega\times (0, \infty),\\
\frac{\partial u}{\partial n}=0 \quad &\mbox{on}\ \partial \Omega\times (0, \infty),\\
u(x,0)=u_0(x) \quad &\mbox{in}\ \Omega,
\end{cases}\end{equation}
where $u(x,t)$ represents the density of the species at location $x$
and time $t$,
$\mu>0$ is  the diffusion rate, $\Omega$ is the habitat of  species
 and it is assumed to be an open bounded region in $R^{N}$ with smooth boundary $\partial{\Omega}$,
 and $n$ is the outward unit normal vector on $\partial{\Omega}$.
 The zero-flux boundary condition means there are no individuals crossing the boundary of the habitat.

In the last few decades   equation \eqref{genf}
 has attracted considerable attentions as an important single species model in spatial ecology; see
 \cite{CC1989, CC1991, CC1993, CC1998,
 Cantrell-Cosner, HeNi1, HeNi2, HeNi3, lou2007, lou2015, Ni2011} and the references therein.
In the case of $f(x,u)=f(u)$, i.e. the underlying environment
is spatially homogeneous,
 it was shown in \cite{CaHo, Matano}
  that any stable steady state of \eqref{genf} must be constant,
   i.e. they are independent of $x$ and $\mu$. However, if $f(x,u)$
    depends on $x$,
    then \eqref{genf}
   could have non-constant stable steady state solutions, which
   are also dependent on $\mu$.
   It is easy to see that any bounded steady state of \eqref{genf}
    converges to some constant as $\mu\to\infty$, which
   is not surprising as diffusion generally tends to average the distribution
of organisms, i.e. increasing the diffusion will reduce the spatial variability
of population distributions.

A natural question  aries: What kind of monotone property
     holds for steady state of \eqref{genf} in terms of parameter $\mu$?
     It was shown in \cite{LandL} that
     for any stable  steady state of \eqref{genf},
   denoted by $u(x; \mu)$, it holds that $\int_\Omega |\nabla u|^2$
   is monotone decreasing in $\mu$.
     Biologically this implies that the population distribution
becomes flatter in average if we increase the diffusion rate.
     Recently, in the investigation of a predator-prey model in heterogeneous
environments \cite{LouYuan-WangBiao},
the authors studied another monotone property of steady state of \eqref{genf};
Namely, whether the maximum of the unique
 solution of the following equation, denoted by $\theta(x; \mu)$,
 is  monotone decreasing in $\mu$:
\begin{equation}\label{EQmain}\begin{cases}
\mu \Delta \theta+ \theta(m(x)-\theta)=0\quad \hspace{0.5em} &\mbox{in}\ \Omega,\\
\theta>0 \quad \hspace{0.5em} &\mbox{in}\ \Omega,\\
\frac{\partial\theta}{\partial n}=0\quad \hspace{0.5em} &\mbox{on}\ \partial \Omega.
\end{cases}\end{equation}

 In order to ensure the existence and uniqueness of solution
 of \eqref{EQmain} for all $\mu>0$,  throughout the paper we always assume that

\medskip
\begin{enumerate}[label=(M\arabic*), start=0] 	
\item \label{conditions}$m(x) \in C^{1}(\bar{\Omega})$, it is
non-constant and $\int_{\Omega} m(x)dx\ge 0$.
	\end{enumerate}
\medskip

The proof of existence and uniqueness for  solution
 of \eqref{EQmain} can be found in \cite{Cantrell-Cosner}.
 Under the assumption (M0), the following result was established in \cite{LouYuan-WangBiao}:
\begin{lemma}\label{lem:LW}
	Suppose that $\Omega$ is an interval, $m(x)\in C^{2}(\bar{\Omega}), m_x (x)\neq 0$ and $m_{xx}(x)\neq 0$ in $\bar{\Omega}$. Then $\max_{x\in\bar{\Omega}} \theta(x; \mu) $ is strictly decreasing in $\mu$.
	\end{lemma}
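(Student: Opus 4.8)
The plan is to differentiate the equation in $\mu$ and show that the $\mu$-derivative of $\max_{\bar\Omega}\theta$ is strictly negative at every $\mu>0$. Since $m_x$ is continuous and never zero on $\bar\Omega=[a,b]$ it has a fixed sign; assume $m_x>0$, the case $m_x<0$ following from the reflection $x\mapsto a+b-x$. Write $\theta=\theta(\cdot\,;\mu)$, let a prime denote differentiation in $x$, and set $\mathcal{L}u:=\mu u''+(m-2\theta)u$. Because $\mathcal{L}\theta=-\theta^{2}<0$, the function $\theta$ is a positive strict supersolution of $-\mathcal{L}$, so the principal Neumann eigenvalue of $-\mathcal{L}$ is positive; in particular $\mathcal{L}$ is invertible and, by the implicit function theorem, $\mu\mapsto\theta(\cdot\,;\mu)$ is a $C^{1}$ curve. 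The first genuine step is to prove that $\theta$ is strictly increasing on $[a,b]$: differentiating \eqref{EQmain} in $x$ shows that $w:=\theta'$ solves $\mathcal{L}w=-m_x\theta<0$ in $(a,b)$ with $w=0$ on $\partial\Omega$, and setting $\eta:=w/\theta$ and using $\mathcal{L}\theta=-\theta^{2}$ one computes $\mu\eta''+2\mu(\theta'/\theta)\eta'-\theta\eta=-m_x<0$ in $(a,b)$ with $\eta=0$ on $\partial\Omega$; this operator has no zeroth-order term of the wrong sign, so the maximum principle forces $\eta>0$, hence $\theta'>0$, on $(a,b)$. Consequently $M(\mu):=\max_{\bar\Omega}\theta(\cdot\,;\mu)=\theta(b;\mu)$ and $M$ is $C^{1}$ with $M'(\mu)=v(b)$, where $v:=\partial_{\mu}\theta$ solves $\mathcal{L}v=-\theta''$ in $(a,b)$, $v'=0$ on $\partial\Omega$. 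It therefore suffices to show $v(b)<0$.

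To determine the sign of $v(b)$ I would bring in the solution $\phi$ of the initial value problem $\mathcal{L}\phi=0$ in $(a,b)$, $\phi(a)=1$, $\phi'(a)=0$, and claim that $\phi>0$ on $[a,b]$ with $\phi'>0$ on $(a,b]$. This is the technical core, and it uses the same device as the monotonicity of $\theta$: $\psi:=\phi/\theta$ satisfies $\mu\psi''+2\mu(\theta'/\theta)\psi'-\theta\psi=0$ with $\psi'(a)=0$, so at any interior critical point of $\psi$ the equation gives $\psi''=\theta\psi/\mu>0$. Hence $a$ is a strict local minimum of $\psi$, and $\psi'$, once positive, can never return to $0$; thus $\psi'>0$ on $(a,b]$. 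Running this argument on the largest subinterval where $\phi>0$ then excludes a first zero of $\phi$ (there $\psi$ would vanish, contradicting that $\psi$ is increasing and $\psi(a)>0$), so $\phi>0$ on $[a,b]$, and finally $\phi'=\psi'\theta+\psi\theta'>0$ on $(a,b]$.

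The conclusion is then a short computation with Lagrange's identity for the Sturm--Liouville operator $\mathcal{L}$: $\int_a^b(\phi\,\mathcal{L}v-v\,\mathcal{L}\phi)\,dx=\mu[\phi v'-v\phi']_a^b$. The right-hand side collapses, via $\phi'(a)=0$ and $v'(a)=v'(b)=0$, to $-\mu\phi'(b)v(b)$, while the left-hand side equals $-\int_a^b\phi\,\theta''\,dx$. Integrating by parts once more and using $\theta'(a)=\theta'(b)=0$ gives $\int_a^b\phi\,\theta''\,dx=-\int_a^b\phi'\theta'\,dx<0$, since $\phi'>0$ and $\theta'>0$ on $(a,b)$. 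As $\mu\phi'(b)>0$, we obtain $v(b)=\bigl(\mu\phi'(b)\bigr)^{-1}\int_a^b\phi\,\theta''\,dx<0$, hence $M'(\mu)<0$ for every $\mu>0$, which is the assertion.

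The main obstacle is the positivity-and-monotonicity claim for the auxiliary function $\phi$; everything else is bookkeeping with the maximum principle and integration by parts. I also note that this argument uses only the hypothesis $m_x\neq0$ and not $m_{xx}\neq0$, so it in fact yields slightly more than Lemma~\ref{lem:LW}; the convexity of $m$ is presumably needed only when one tries to weaken the monotonicity hypothesis on $m$ itself.
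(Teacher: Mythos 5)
Your proof is correct, and for the decisive step it takes a genuinely different route from the one in the paper (the paper does not reprove Lemma~\ref{lem:LW} directly but subsumes it in Theorem~\ref{thmmono}, whose proof is the right comparison). Both arguments share the same skeleton: show $\theta'>0$ so that the maximum sits at the right endpoint, then show $\theta_\mu<0$ there. For the first step the paper (Lemma~\ref{lemfirst}) uses an elementary sign/shooting argument based on the bounds $\min m^+<\theta<\max m$, which works for merely non-decreasing $m^+$; your substitution $\eta=\theta'/\theta$ plus the maximum principle is cleaner but uses $m_x>0$ pointwise, which is fine under the hypotheses here. The real divergence is in the second step. The paper multiplies the linearized equation by $\theta$ itself, integrates to get $\int_0^1\theta^2\theta_\mu=-\int_0^1(\theta')^2<0$, and then runs a contradiction argument tracking the zeros of $\theta_\mu$ near $x=1$; because $\mathcal{L}\theta=-\theta^2\neq0$, the Wronskian identity does not close and the extra zero-counting is unavoidable. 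You instead test against a genuine homogeneous solution $\phi$ of $\mathcal{L}\phi=0$ with $\phi(a)=1$, $\phi'(a)=0$, prove $\phi>0$ and $\phi'>0$ by the same $\psi=\phi/\theta$ device, and then the Lagrange identity collapses to the closed formula $\theta_\mu(b)=\bigl(\mu\phi'(b)\bigr)^{-1}\int_a^b\phi\,\theta''\,dx=-\bigl(\mu\phi'(b)\bigr)^{-1}\int_a^b\phi'\theta'\,dx<0$. This buys you an explicit expression for $M'(\mu)$ (so $M$ is seen to be $C^1$ with strictly negative derivative, slightly more than ``strictly decreasing'') at the price of the auxiliary positivity-and-monotonicity lemma for $\phi$, whose proof I have checked and which holds as you state it. Your closing remark that $m_{xx}\neq0$ is never used is also correct and is precisely the improvement the paper records as Theorem~\ref{thmmono}.
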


Lemma \ref{lem:LW} plays an important role in understanding the dynamics of
the predator-prey model considered in \cite{LouYuan-WangBiao};
See \cite{JHe} for further developments.
The issue of the monotonicity of $\max_{x\in\bar{\Omega}} \theta(x; \mu) $
also appeared in a recent work \cite{HLLN} on consumer-resource dynamics
in heterogeneous environments.

Our main goal in this paper is
to extend Lemma \ref{lem:LW}.
Our first result concerns general domain
and assumes the following condition:

\medskip
\begin{enumerate}[label=(M\arabic*),resume]
	\item \label{conditionc} $m$ is positive in $\bar\Omega$ and satisfies
$
{\max_{\bar{\Omega}} m}\leq 2 {\min_{\bar{\Omega}} m}.
$
\end{enumerate}
\medskip

\begin{theorem}\label{doublem}
	Assume that $m(x)$ satisfies {\rm\ref{conditions}} and {\rm\ref{conditionc}}. Then the function
	$$\mu\mapsto M(\mu):=\max_{x\in\bar{\Omega}}\theta(x;\mu)$$
	is strictly decreasing in $\mu\in(0,\infty)$ and the function
	$$\mu\mapsto S(\mu):=\min_{x\in \bar{\Omega}}\theta(x;\mu)$$
	is strictly increasing in $\mu\in(0,\infty)$.
\end{theorem}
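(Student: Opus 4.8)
\emph{Proof strategy.}
The plan is to differentiate the steady state in $\mu$ and to determine the sign of $W:=\partial_\mu\theta(\cdot;\mu)$ at the points where $\theta(\cdot;\mu)$ attains its maximum and its minimum; from that, the strict monotonicity of $M$ and $S$ follows by an envelope argument. First I would assemble the standard a priori facts. The maximum principle gives $\min_{\bar\Omega}m\le\theta(\cdot;\mu)\le\max_{\bar\Omega}m$, and since $m>0$ and $\theta$ is necessarily nonconstant (as $m$ is), the strong maximum principle and Hopf's lemma sharpen this to the strict bounds $\min_{\bar\Omega}m<\theta<\max_{\bar\Omega}m$ on $\bar\Omega$. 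Now hypothesis \ref{conditionc} enters: from $\theta\ge\min_{\bar\Omega}m\ge\tfrac12\max_{\bar\Omega}m\ge\tfrac12 m$ we obtain
\[
m(x)-2\theta(x)\le 0\qquad\text{for all }x\in\bar\Omega .
\]
Multiplying \eqref{EQmain} out gives $\mu\Delta\theta+(m-2\theta)\theta=-\theta^2<0$ with $\theta>0$, so the linearized operator $\mathcal L:=\mu\Delta+(m-2\theta)$ with Neumann boundary conditions has negative principal eigenvalue and is in particular invertible; by the implicit function theorem $\mu\mapsto\theta(\cdot;\mu)$ is $C^1$ from $(0,\infty)$ into $C^2(\bar\Omega)$, and differentiating \eqref{EQmain} in $\mu$ yields
\[
\mathcal L W=\mu\Delta W+(m-2\theta)W=-\Delta\theta=\tfrac1\mu\,\theta\,(m-\theta)\quad\text{in }\Omega,\qquad \frac{\partial W}{\partial n}=0\ \text{ on }\partial\Omega .
\]
Since $m-2\theta\le 0$, the operator $\mathcal L$ obeys the maximum principle and Hopf's lemma.

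The heart of the matter is to show that $W$ has the same sign as $\theta-m$, i.e.\ $W\le 0$ on $\{\theta\le m\}$ and $W\ge 0$ on $\{\theta\ge m\}$. The right-hand side $\tfrac1\mu\theta(m-\theta)$ above is positive on the open set $\{\theta<m\}$ and negative on $\{\theta>m\}$, so $\mathcal L W>0$ on $\{\theta<m\}$ and $\mathcal L W<0$ on $\{\theta>m\}$; together with $m-2\theta\le 0$, the maximum principle forbids a nonnegative interior local maximum of $W$ in $\{\theta<m\}$ and a nonpositive interior local minimum of $W$ in $\{\theta>m\}$, while Hopf's lemma plus the zero-flux condition exclude such extrema on $\partial\Omega$. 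Hence a nonnegative maximum of $W$ over $\overline{\{\theta<m\}}$, or a nonpositive minimum of $W$ over $\overline{\{\theta>m\}}$, can only occur on the interface $\{\theta=m\}$. Controlling $W$ on $\{\theta=m\}$ is the step I expect to be the main obstacle: there $\mathcal L W=0$ and $m-2\theta=-\theta<0$, and one must rule out that $W>0$ at an interface point approached from within $\{\theta<m\}$ (and symmetrically for $W<0$ from $\{\theta>m\}$). I would handle this by a boundary-point / strong-maximum-principle analysis across the (generically smooth) hypersurface $\{\theta=m\}$, using also the identity
\[
\int_\Omega (m-2\theta)\,W=\tfrac1\mu\int_\Omega \theta\,(m-\theta)=0 ,
\]
which already forces $W$ to change sign and makes the claimed dichotomy consistent, and by disposing of the degenerate configurations (such as $\{\theta=m\}$ having nonempty interior, or an extremizer of $\theta$ lying on $\{\theta=m\}$) via further Hopf-type arguments applied on suitable half-balls together with unique continuation for $\theta$ (a $\theta$ that is locally constant would force $m$ to be constant).

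Granting $\operatorname{sign}W=\operatorname{sign}(\theta-m)$, the conclusion follows. If $x_0$ is a maximizer of $\theta(\cdot;\mu)$ then $\theta(x_0)\big(m(x_0)-\theta(x_0)\big)=-\mu\Delta\theta(x_0)\ge 0$, so $\theta(x_0)\le m(x_0)$, and by the above the equality $\theta(x_0)=m(x_0)$ is excluded, so $x_0\in\{\theta<m\}$; there $\mathcal L W>0$ and $W\le 0$, and if $W(x_0)=0$ then $x_0$ would be an interior maximum of $W$ over $\overline{\{\theta<m\}}$ with value $0$, contradicting the maximum principle; thus $W(x_0)<0$, and symmetrically $W(y_0)>0$ at every minimizer $y_0$. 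Finally, by the $C^1$-dependence of $\theta$ on $\mu$ and a standard envelope (Danskin-type) argument, for each $\mu>0$ the right derivative of $M$ equals $\max\{W(x;\mu):\theta(x;\mu)=M(\mu)\}<0$ and the right derivative of $S$ equals $\min\{W(x;\mu):\theta(x;\mu)=S(\mu)\}>0$; since $M$ and $S$ are continuous, this shows $M$ is strictly decreasing and $S$ is strictly increasing on $(0,\infty)$.
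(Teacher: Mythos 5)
Your reduction of the theorem to the sign of $\theta_\mu$ at the extremizers, and the closing envelope argument, are sound; the problem is that the central claim on which everything rests, namely $\operatorname{sign}\theta_\mu=\operatorname{sign}(\theta-m)$, is not proved, and I do not believe it is true. You yourself flag the interface $\{\theta=m\}$ as the main obstacle and only sketch how you would treat it, but note what your dichotomy would entail: $\theta_\mu$ would have to vanish identically on $\{\theta=m\}$. That is a rigid, nonlocal constraint. The function $\theta_\mu$ solves $\mu\Delta\theta_\mu+(m-2\theta)\theta_\mu=-\Delta\theta$ with Neumann data, and its zero set is governed by global balance relations such as $\int_\Omega(m-2\theta)\theta_\mu\,dx=0$ and $\int_\Omega\theta^2\theta_\mu\,dx=-\int_\Omega|\nabla\theta|^2\,dx$, not by where the local source $-\Delta\theta=\tfrac1\mu\theta(m-\theta)$ happens to vanish. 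Already in one dimension with $m$ increasing, $\theta-m$ changes sign at some interior point $x^*$, and there is no structural reason for the sign change of $\theta_\mu$ to occur exactly at $x^*$. Consequently the maximum-principle argument on the subdomains $\{\theta<m\}$ and $\{\theta>m\}$ cannot be closed: you have no control over $\theta_\mu$ on the portion of their boundaries lying on the interface, which is precisely where its extrema over those subdomains may sit.

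The repair is to apply the maximum principle not to $\theta_\mu$ itself but to the combination $v:=\theta+\mu\theta_\mu-K$. A direct computation from \eqref{EQmain} and the $\theta_\mu$-equation gives $\mu\Delta v+(m-2\theta)v=(m-2\theta)(\theta-K)$ in $\Omega$ with $\partial v/\partial n=0$ on $\partial\Omega$. Hypothesis \ref{conditionc} enters exactly as in your setup, via $2\theta>2\min_{\bar\Omega}m\ge\max_{\bar\Omega}m$, so $m-2\theta<0$ on $\bar\Omega$. Choosing $K=\max_{\bar\Omega}\theta$ makes the right-hand side nonnegative, and the strong maximum principle together with Hopf's lemma then give $v<0$ on all of $\bar\Omega$, i.e. $\theta+\mu\theta_\mu<\max_{\bar\Omega}\theta$ everywhere; evaluated at any maximizer of $\theta$ this reads $\theta_\mu<0$ there. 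The choice $K=\min_{\bar\Omega}\theta$ symmetrically gives $\theta_\mu>0$ at every minimizer. This single substitution replaces your interface analysis entirely and is the route the paper takes; your remaining steps can then be kept as written.
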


	Theorem {\rm\ref{doublem}} not only infers
 that  $M(\mu)$ is decreasing in $\mu$, but
  also $S(\mu)$ is increasing in $\mu$.
 For general $\Omega$ and $m$, it is unknown whether $M(\mu)$ is always decreasing.
 However, $S(\mu)$
is not always  monotone increasing in the diffusion rate \cite{HeXiaoqing-NiWeiMing2016}.
We suspect that  $M(\mu)-S(\mu)$, which
measures the spatial variation of the population distribution,
 is always decreasing.

Next we consider one-dimensional domain and
monotone  $m(x)$.

\medskip
\begin{enumerate}[label=(M\arabic*), resume]
	\item \label{conditiona} $m^{+}(x):=\max\{m(x), 0\}$ is monotone in $[0,1].$
\end{enumerate}
\medskip

\begin{theorem}\label{thmmono}	
			Suppose $m$ satisfies {\rm\ref{conditions}} and {\rm\ref{conditiona}},	then
		$M(\mu)$ is strictly decreasing.
More precisely, if $m^{+}(x)$ is non-decreasing in $(0,1)$, then $\theta'(x;\mu)>0$ for $x\in(0,1)$ and $\theta_{\mu}(1;\mu)<0;$ If $m^{+}(x)$ is non-increasing in $(0,1)$,
then $\theta'(x;\mu)<0$ for $x\in (0,1)$ and $\theta_{\mu}(0;\mu)>0.$
	\end{theorem}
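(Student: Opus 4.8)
The plan is to reduce everything to the case in which $m^{+}$ is non-decreasing on $(0,1)$; the non-increasing case then follows by applying the result to $\widetilde m(x):=m(1-x)$ together with the reflection $x\mapsto 1-x$. So assume $m^{+}$ is non-decreasing. There are two things to establish: (i) the spatial monotonicity $\theta'(x;\mu)>0$ for $x\in(0,1)$, which forces $\max_{[0,1]}\theta(\cdot;\mu)$ to be attained only at $x=1$, so that $M(\mu)=\theta(1;\mu)$; and (ii) $\theta_{\mu}(1;\mu)<0$. Since $\mathcal L:=\mu\partial_{xx}+(m-2\theta)$ has negative principal Neumann eigenvalue — indeed $-\mathcal L=\bigl(-\mu\partial_{xx}-(m-\theta)\bigr)+\theta$, the first summand having principal Neumann eigenvalue $0$ with positive eigenfunction $\theta$, so $-\mathcal L$ has principal Neumann eigenvalue at least $\min_{[0,1]}\theta>0$ — the map $\mu\mapsto\theta(\cdot;\mu)$ is $C^{1}$ by the implicit function theorem, so $M'(\mu)=\theta_{\mu}(1;\mu)$ and (i) and (ii) together give the theorem, including the ``more precisely'' statements.

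For (i), set $x^{*}:=\sup\{x\in[0,1]:m\le 0\text{ on }[0,x]\}$ (with $x^{*}=0$ if $m(0)>0$). On $[0,x^{*}]$ one has $\mu\theta''=\theta(\theta-m)\ge\theta^{2}>0$, so $\theta'$ is strictly increasing and, since $\theta'(0)=0$, $\theta'>0$ on $(0,x^{*}]$. On $[x^{*},1]$ the function $m$ is non-negative and non-decreasing, hence $m'\ge 0$; differentiating the equation, $w:=\theta'$ satisfies $\mu w''+(m-2\theta)w=-m'\theta\le 0$ with $w=0$ at both endpoints of $(0,1)$. Since $-\mathcal L$ has positive principal Neumann eigenvalue on $(0,1)$, hence positive principal Dirichlet eigenvalue on every subinterval, the maximum principle forbids $w$ from being negative on any subinterval of $(x^{*},1)$; combined with the previous paragraph this gives $\theta'\ge 0$ on $[0,1]$, and the strong maximum principle (using that $m$, hence $m'\theta$, is not identically $0$) upgrades this to $\theta'>0$ on $(0,1)$.

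Part (ii) is the heart of the proof. One first shows $M(\mu)<\max_{[0,1]}m$ for every $\mu>0$: since $\theta\le\max m$, equality $\theta(1;\mu)=\max m$ would make $v:=\max m-\theta$ a solution of $-\mu v''+\theta v=\theta(\max m-m)\ge 0$ with $v>0$ on $[0,1)$ and $v(1)=v'(1)=0$, contradicting the Hopf boundary point lemma at $x=1$; hence $\mu\theta''(1;\mu)=M(\mu)\bigl(M(\mu)-m(1)\bigr)<0$, so $\theta''(1;\mu)<0$. Next, differentiating the equation in $\mu$, $\phi:=\theta_{\mu}$ solves $\mathcal L\phi=-\theta''$ with Neumann data; pairing with $\theta$, integrating by parts, and using $\mathcal L\theta=-\theta^{2}$ yields the identity $\int_{0}^{1}\theta^{2}\theta_{\mu}\,dx=-\int_{0}^{1}(\theta')^{2}\,dx<0$, so $\theta_{\mu}$ is strictly negative on a set of positive measure. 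The main obstacle is to transport this to the endpoint $x=1$. The route I would pursue is to show that $\theta_{\mu}$ changes sign at most once on $[0,1]$; combined with the negative $\theta^{2}$-weighted integral and the fact that $\theta$ is increasing (so $\theta^{2}$ is heaviest near $x=1$), this forces $\theta_{\mu}<0$ on a left-neighborhood of $x=1$, and a boundary-point argument at $x=1$ — using $\theta''(1;\mu)<0$ and the relation $\mu\theta_{\mu}''(1;\mu)=-\theta''(1;\mu)>0$ that would hold were $\theta_{\mu}(1;\mu)=0$ — then gives $\theta_{\mu}(1;\mu)<0$. An alternative to this last step is a direct comparison of $\theta(\cdot;\mu_{1})$ and $\theta(\cdot;\mu_{2})$ for $\mu_{1}<\mu_{2}$: rescaling by $x\mapsto\sqrt{\mu_{2}/\mu_{1}}\,x$ turns $\theta(\cdot;\mu_{1})$ into a solution of the $\mu_{2}$-equation on the larger interval $\bigl(0,\sqrt{\mu_{2}/\mu_{1}}\bigr)$ with the rescaled weight $m\bigl(\sqrt{\mu_{1}/\mu_{2}}\,y\bigr)$, still attaining its maximum $M(\mu_{1})$ at the right endpoint by part (i), after which one compares endpoint values — the sign of the difference of the two solutions being the delicate point.

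Combining (i) and (ii), $M'(\mu)=\theta_{\mu}(1;\mu)<0$ for every $\mu>0$, so $M$ is strictly decreasing; the case of non-increasing $m^{+}$ follows from the above via $x\mapsto 1-x$.
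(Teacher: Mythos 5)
Your part (i) is correct and is a genuinely different route from the paper's Lemma \ref{lemfirst}: where the paper tracks the signs of $\theta'$ and $\theta''$ directly through a smallest-zero argument on subintervals with Neumann data, you differentiate the equation in $x$ and use the positivity of the principal eigenvalue of $-\mu\partial_{xx}-(m-2\theta)$ to run a generalized maximum principle for $w=\theta'$. This works (the endgame needs one more line: if $w\equiv 0$ on a subinterval of $(x^*,1)$ then $m'\equiv 0$ there, so $m$ is constant on $[x^*,1]$, which together with $m\le 0$ on $[0,x^*]$ contradicts {\rm\ref{conditions}}), and it is arguably more systematic than the paper's case analysis.

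Part (ii), however, contains a genuine gap, and it is exactly the step you yourself flag as ``the main obstacle.'' You correctly derive $\int_0^1\theta^2\theta_\mu\,dx=-\int_0^1(\theta')^2\,dx<0$ and the boundary information $\theta''(1;\mu)<0$, but the passage from ``$\theta_\mu<0$ somewhere'' to ``$\theta_\mu(1;\mu)<0$'' is only a plan: the claim that $\theta_\mu$ changes sign at most once is asserted, not proved, and even granting it the deduction does not close --- a function that is negative on most of $(0,1)$ and slightly positive near $x=1$ also has negative $\theta^2$-weighted integral, no matter how the weight is distributed, so ``one sign change plus negative weighted integral'' does not force $\theta_\mu<0$ near $x=1$. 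The rescaling alternative is likewise abandoned at the decisive point. The paper closes this step with a self-contained contradiction argument that you should emulate: assume $\theta_\mu(1)\ge 0$; if $\theta_\mu(1)=0$ then $\theta_\mu'(1)=0$ and $\mu\theta_\mu''(1)=-\theta''(1)>0$ force $\theta_\mu>0$ in a left neighborhood of $1$ (and likewise, trivially, if $\theta_\mu(1)>0$); letting $x_0\in[0,1)$ be the largest zero of $\theta_\mu$ (if any), one has $\theta_\mu(x_0)=0$, $\theta_\mu'(x_0)\ge 0$, and integrating the Wronskian identity $\mu(\theta_\mu'\theta-\theta'\theta_\mu)'=-\theta\theta''+\theta^2\theta_\mu$ over $(x_0,1)$ gives $0\ge-\mu\theta_\mu'(x_0)\theta(x_0)=\theta(x_0)\theta'(x_0)+\int_{x_0}^1\bigl((\theta')^2+\theta^2\theta_\mu\bigr)\,dx>0$, a contradiction; hence $\theta_\mu>0$ on all of $[0,1)$, which contradicts the global integral identity. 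Until you supply an argument of this kind (or an actual proof of your one-sign-change lemma together with a correct localization near $x=1$), part (ii), and with it the theorem, is not established.
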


Theorem \ref{thmmono} improves
Lemma \ref{lem:LW}	by dropping the condition $m_{xx}(x)\neq0$ in $\Omega$.

Our final result says that
if $m$ has a unique interior critical point in one dimensional $\Omega$,
then $M(\mu)$ is also monotone decreasing.

\medskip
\begin{enumerate}[label=(M\arabic*), resume]
	\item \label{conditionb}
For some $\rho\in (0,1)$, $m'(x)>0$ in $[0,\rho)$ and $m'(x)<0$ in $(\rho, 1]$.
\end{enumerate}

\begin{theorem}\label{deincreasing}
 Suppose that $m(x)$ satisfies  {\rm\ref{conditions}} and
 {\rm\ref{conditionb}},  then  $M(\mu)$ is strictly decreasing in $\mu$.
\end{theorem}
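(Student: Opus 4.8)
The plan is to reduce the assertion to two structural facts about $\theta=\theta(\cdot;\mu)$, which I would establish separately: \textbf{(I)} for every $\mu>0$, $\theta(\cdot;\mu)$ attains its maximum $M(\mu)$ at a \emph{single} point $x^{*}=x^{*}(\mu)\in[0,1]$, and $\theta$ is strictly increasing on $[0,x^{*}]$ and strictly decreasing on $[x^{*},1]$; \textbf{(II)} $\theta_{\mu}(x^{*}(\mu);\mu)<0$ for every $\mu>0$. Granting these, $\theta$ depends smoothly on $\mu$ (the linearisation $L_{0}$ below is invertible, since its principal Neumann eigenvalue is positive), so $M$ is differentiable; because the maximiser is unique and $\theta_{x}(x^{*};\mu)=0$ there (the first‑order condition at an interior point, the Neumann condition at a boundary point), Danskin's theorem (the envelope formula) gives $M'(\mu)=\theta_{\mu}(x^{*}(\mu);\mu)$, and \textbf{(II)} forces $M'(\mu)<0$, so $M$ is strictly decreasing.

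To prove \textbf{(I)} I would use the energy
\[
E(x):=\tfrac{\mu}{2}\,\theta'(x)^{2}+\tfrac12\,m(x)\,\theta(x)^{2}-\tfrac13\,\theta(x)^{3},
\]
which satisfies $E'(x)=\tfrac12\,\theta(x)^{2}m'(x)$; by \ref{conditionb}, $E$ is strictly increasing on $[0,\rho]$ and strictly decreasing on $[\rho,1]$. At a critical point $c$ of $\theta$ one has $E(c)=\theta(c)^{2}\big(\tfrac12 m(c)-\tfrac13\theta(c)\big)$, while $\mu\theta''(c)=-\theta(c)(m(c)-\theta(c))$ and the second‑derivative test give $m(c)\ge\theta(c)$ at a local maximum and $m(c)\le\theta(c)$ at a local minimum; hence $E(c)\ge\tfrac16\theta(c)^{3}$ at a local maximum and $E(c)\le\tfrac16\theta(c)^{3}$ at a local minimum. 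If $\theta$ had two local maxima $p_{1}<p_{2}$, pick an intervening local minimum $q$; it lies (weakly) on one side of $\rho$, say $q\le\rho$. Then $\theta(p_{1})>\theta(q)$ because $\theta$ is strictly decreasing on $[p_{1},q]$, whereas $E(p_{1})<E(q)$ because $E$ is strictly increasing on $[0,\rho]$; combining, $\tfrac16\theta(p_{1})^{3}\le E(p_{1})<E(q)\le\tfrac16\theta(q)^{3}$, i.e. $\theta(p_{1})<\theta(q)$ --- a contradiction (the case $q\ge\rho$ is symmetric). Degenerate critical points cause no trouble: if $c\ne\rho$ and $\theta''(c)=0$, then $\mu\theta'''(c)=-\theta(c)m'(c)\ne0$, so $c$ is an inflection point, not an extremum. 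Thus $\theta$ has exactly one local maximum, which gives \textbf{(I)}; the monotone cases $x^{*}\in\{0,1\}$ are automatically included.

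For \textbf{(II)}, differentiating \eqref{EQmain} in $\mu$ shows $w:=\theta_{\mu}$ solves $\mu w''+(m-2\theta)w=-\theta''$ with $w'(0)=w'(1)=0$; multiplying by $\theta$, integrating by parts twice and using $\mu\theta''=-\theta(m-\theta)$, one obtains the global identity
\[
\int_{0}^{1}\theta^{2}\,w\,dx=-\int_{0}^{1}(\theta')^{2}\,dx<0 ,
\]
so $w$ is negative in the $\theta^{2}$‑weighted mean. To promote this to $w(x^{*})<0$ I would use that $\theta$ is a positive, linearly stable solution, so $L_{0}:=\mu\,\partial_{xx}+(m-2\theta)$ has principal Neumann eigenvalue $\lambda_{1}(-L_{0})>0$; then $-L_{0}$ is invertible with a positive Green's function $G_{0}$, $w=(-L_{0})^{-1}\theta''$, and by \textbf{(I)} one has $\theta''\le0$ near $x^{*}$ (where $m\ge\theta$), $\theta''\ge0$ near the endpoints, and $\int_{0}^{1}\theta''=0$. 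The heuristic is transparent --- larger diffusion flattens $\theta$, so $\theta$ decreases fastest at its peak --- and the task is to show that in $w(x^{*})=\int_{0}^{1}G_{0}(x^{*},y)\,\theta''(y)\,dy$ the contribution of the concave block around $x^{*}$ prevails. A natural device is a comparison function: with $\alpha=1/\mu$ and $\beta=M$, the function $\hat w:=-\alpha(\theta-\beta)$ satisfies $L_{0}\hat w-(-\theta'')=\alpha(M-\theta)(m-2\theta)$ and $\hat w'(0)=\hat w'(1)=0$, so whenever $m\le2\theta$ holds it is a supersolution, the maximum principle gives $w\le\hat w$, and the displayed identity rules out equality, yielding $w(x^{*})<\hat w(x^{*})=0$. (This already recovers the conclusion of Theorem \ref{doublem}.) Under \ref{conditionb} alone, $m\le2\theta$ may fail near $\rho$, so a more refined barrier is needed there --- one with variable coefficient, or assembled from the two monotone pieces of $\theta$ on either side of $x^{*}$ --- after which the displayed identity and the sign pattern of $\theta''$ are used to exclude $w(x^{*})\ge0$.

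I expect \textbf{(II)} to be the crux: producing a clean pointwise sign for $\theta_{\mu}$ at the \emph{moving} maximiser, uniformly in $\mu$ --- including when $x^{*}$ is close to $\rho$ (so $\theta''(x^{*})$ is small), when $x^{*}$ hits the boundary, and when $m$ changes sign --- is substantially harder than \textbf{(I)} or the envelope reduction, both of which are essentially routine. If a global barrier proves elusive, a workable fallback is to split the $\mu$‑range according to whether $x^{*}\le\rho$ or $x^{*}\ge\rho$, on which $m$ restricted to $[0,x^{*}]$, respectively to $[x^{*},1]$, is monotone, and to adapt the mechanism behind Theorem \ref{thmmono}.
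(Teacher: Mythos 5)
Your step \textbf{(I)} is fine and is in fact a genuinely different route to the paper's Lemma \ref{once}: the Lyapunov quantity $E=\tfrac{\mu}{2}(\theta')^{2}+\tfrac12 m\theta^{2}-\tfrac13\theta^{3}$ with $E'=\tfrac12\theta^{2}m'$ rules out two local maxima very cleanly, whereas the paper gets the same increasing--then--decreasing profile by applying its monotone-$m$ lemma (Lemma \ref{lemfirst}) separately on $[0,\rho]$ and $[\rho,1]$. The reduction of the theorem to the pointwise sign $\theta_{\mu}(x^{*};\mu)<0$ is also the paper's reduction (it uses a direct continuity argument rather than Danskin, but that is cosmetic).

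The genuine gap is step \textbf{(II)}, which you yourself flag as the crux and do not close. Your barrier $\hat w=-\tfrac1\mu(\theta-M)$ only works where $m\le 2\theta$, i.e.\ essentially under hypothesis \ref{conditionc}; under \ref{conditionb} alone the set where $m>2\theta$ can be nonempty, the comparison $L_{0}(\hat w-w)\le 0$ fails there, and ``a more refined barrier is needed'' is not an argument. The Green's-function heuristic likewise stalls: positivity of $G_{0}$ plus the sign pattern of $\theta''$ does not by itself decide the competition at $x^{*}$. What actually closes this step in the paper (Lemma \ref{prepare}) is your ``fallback'' made precise, and it uses no barrier at all: from the Wronskian identity $\mu(\theta_{\mu}'\theta-\theta'\theta_{\mu})'=-\theta\theta''+\theta^{2}\theta_{\mu}$ one integrates \emph{separately} over $[0,\eta]$ and $[\eta,1]$, where $\eta=x^{*}$ is the turning point of $\theta$; assuming $\theta_{\mu}(\eta)\ge 0$, one of the two one-sided arguments (depending on the sign of $\theta_{\mu}'(\eta)$) forces $\theta_{\mu}\equiv 0$ on that half-interval --- the degenerate case $\theta_{\mu}(\eta)=\theta_{\mu}'(\eta)=0$ being handled by the local positivity Lemma \ref{gelem} --- and then the $\theta_{\mu}$-equation \eqref{equation31} gives $\theta''\equiv 0$, hence $\theta'\equiv 0$, on that half, contradicting \textbf{(I)}. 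The essential observation you are missing is that the integration mechanism behind Theorem \ref{thmmono} uses only the sign of $\theta'$ on each side of the maximiser, not any monotonicity of $m$, so it applies verbatim to both halves; without this (or some substitute), your proof of \textbf{(II)}, and hence of the theorem, is incomplete.
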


This paper is organized as follows. In Section 2 we establish Theorem \ref{doublem}.
Section 3 is devoted to the proofs
of Theorems \ref{thmmono} and \ref{deincreasing}.
Finally in Section 4 we discuss our main results and possible extensions.

\section{\bf General domain}
The goal of this section is to establish Theorem \ref{doublem}.
We will also illustrate that
$\min_{\bar\Omega}\theta$ is not necessarily monotone increasing in $\mu$, a result
due to He and Ni \cite{HeXiaoqing-NiWeiMing2016}.
To this end we first establish some properties of the solution of \eqref{EQmain}.

\begin{lemma}\label{maximum}
	Suppose  $m(x)$ satisfies {\rm\ref{conditions}}, then \eqref{EQmain} admits a unique solution, denoted by $\theta=\theta(x; \mu)$.
Furthermore,
\begin{equation}\label{eq:2-1-01}
\min_{\bar{\Omega}} m^{+}<\theta(x;\mu)<\max_{\bar{\Omega}} m
\end{equation}
holds	for all $x\in\bar{\Omega}$ and $\mu>0$.
\end{lemma}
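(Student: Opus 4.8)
The plan is the following. The existence and uniqueness of the positive solution $\theta=\theta(x;\mu)$ under assumption (M0) is precisely the result of \cite{Cantrell-Cosner} recalled above, so the task reduces to proving the two-sided estimate \eqref{eq:2-1-01}. I would first record two easy consequences of (M0) that get used repeatedly: $\max_{\bar\Omega}m>0$ (otherwise $m\le 0$ on $\bar\Omega$, which together with $\int_\Omega m\ge 0$ forces $m\equiv 0$, contradicting that $m$ is non-constant); and $\theta>0$ on all of $\bar\Omega$, which is built into the notion of positive solution of \eqref{EQmain} (and at boundary points follows from Hopf's lemma together with the zero-flux condition). The guiding idea is that \eqref{EQmain} is not in a form to which the maximum principle applies directly, because the zeroth-order coefficient $m-\theta$ has no definite sign; instead I will compare $\theta$ with the \emph{constants} $\max_{\bar\Omega}m$ and $\min_{\bar\Omega}m^{+}$, and the difference of $\theta$ with either of these constants will satisfy a differential inequality whose zeroth-order coefficient is $-\theta\le 0$.

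For the upper bound I would argue by contradiction. Suppose $M:=\max_{\bar\Omega}\theta\ge\max_{\bar\Omega}m$ and set $w:=M-\theta\ge 0$ on $\bar\Omega$. Using \eqref{EQmain} one computes $\mu\Delta w=-\mu\Delta\theta=\theta(m-\theta)=\theta(m-M)+\theta w\le\theta w$, since $m-M\le\max_{\bar\Omega}m-M\le 0$ and $\theta>0$; hence $\mu\Delta w-\theta w\le 0$ in $\Omega$, with bounded nonpositive zeroth-order coefficient $-\theta$. Moreover $w$ vanishes at every maximizer of $\theta$. If that maximizer lies in $\Omega$, the strong maximum principle forces $w\equiv 0$, i.e. $\theta\equiv M$; substituting back into \eqref{EQmain} gives $\theta(m-\theta)\equiv 0$, hence $m\equiv M$ (as $M>0$), contradicting (M0). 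Otherwise $w>0$ throughout $\Omega$ and $w$ vanishes at some $x_0\in\partial\Omega$; then Hopf's boundary point lemma gives $\partial w/\partial n(x_0)<0$, which is impossible because $\partial w/\partial n=-\partial\theta/\partial n=0$ on $\partial\Omega$. Thus $M<\max_{\bar\Omega}m$.

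The lower bound is dual. Put $m_{*}:=\min_{\bar\Omega}m^{+}=\max\{\min_{\bar\Omega}m,\,0\}$. If $m_{*}=0$ the inequality $\theta>m_{*}$ is just the positivity of $\theta$. If $m_{*}>0$ — so $m>0$ on $\bar\Omega$ — I would again argue by contradiction: assuming $\min_{\bar\Omega}\theta\le m_{*}$, set $v:=\theta-m_{*}$, which then attains a nonpositive minimum on $\bar\Omega$. Because $m\ge m_{*}$, one gets $\mu\Delta v=\mu\Delta\theta=\theta(\theta-m)=\theta v+\theta(m_{*}-m)\le\theta v$, so $v$ again satisfies $\mu\Delta v-\theta v\le 0$. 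An interior minimum would, by the strong maximum principle, make $v$ (hence $\theta$) constant, and then \eqref{EQmain} would force $m$ constant, a contradiction; a minimum attained only on $\partial\Omega$ is ruled out by Hopf's lemma exactly as above. Hence $\min_{\bar\Omega}\theta>m_{*}$, and together with the upper bound this gives \eqref{eq:2-1-01}.

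I expect the only genuine step to be the reformulation that unlocks the maximum principle, namely passing from $\theta$ to $M-\theta$ (respectively to $\theta-m_{*}$) so that the governing operator becomes $\mu\Delta-\theta$ with the favorable sign $-\theta\le 0$; note also that the \emph{strictness} of the two inequalities is exactly where the non-constancy of $m$ in (M0) is used. Once the reformulation is in place the rest is routine, the one remaining point of care being the alternative between an interior and a boundary extremum, which is handled through the Neumann condition and Hopf's boundary point lemma (and one uses that $\Omega$ is connected, as in the cited existence result).
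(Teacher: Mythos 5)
Your proposal is correct, and it follows essentially the same route the paper intends: the paper gives no proof of this lemma, citing \cite{Cantrell-Cosner} for existence and uniqueness and asserting that the bounds \eqref{eq:2-1-01} ``follow from the maximum principle'' as in \cite{Averill-Lam-LouYuan}. Your write-up simply supplies the standard details of that argument --- comparing $\theta$ with the constants $\max_{\bar\Omega}m$ and $\min_{\bar\Omega}m^{+}$ so that the operator $\mu\Delta-\theta$ has nonpositive zeroth-order coefficient, and then invoking the strong maximum principle together with Hopf's boundary point lemma and the Neumann condition --- and the details check out, including the strictness coming from the non-constancy of $m$.
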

 The existence and uniqueness of the solution
 of  \eqref{EQmain}  is standard, see \cite{Cantrell-Cosner}.
 The proof of \eqref{eq:2-1-01} is also known and it
 follows from the maximum principle; See \cite{Averill-Lam-LouYuan}
 and references therein.

\begin{lemma}\label{lemmafor}
	Assume that $m$ satisfies  {\rm\ref{conditions}} and {\rm\ref{conditionc}}. Then
	\begin{equation}\label{doublerelationship}
	\min_{x\in \Omega} \theta<\theta +\mu \frac{\partial\theta}{\partial\mu}<\max_{x\in\Omega} \theta
\hspace{0.5ex}\quad \mbox{in}\hspace{0.5ex}\ \bar\Omega.
	\end{equation}
	In particular, we have:

\smallskip
	
\noindent{\rm(i)} If $\bar{x}$ is a global maximum point of $\theta$, then
	$$
\frac{\partial\theta}{\partial\mu}(\bar{x}; \mu)<0;
$$
	
\noindent{\rm(ii)} If $\bar{x}$ is a global minimum point of $\theta$, then
	$$
\frac{\partial\theta}{\partial\mu}(\bar{x}; \mu)>0.
$$

\end{lemma}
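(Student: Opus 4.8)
The plan is to differentiate \eqref{EQmain} with respect to $\mu$ and to study the auxiliary function $w := \theta + \mu\,\theta_\mu$, where $\theta_\mu := \partial\theta/\partial\mu$ (equivalently $w = \partial_\mu(\mu\theta)$). First I would record the smooth dependence of $\theta$ on $\mu$: the principal eigenvalue of $\mu\Delta + (m-\theta)$ under the Neumann condition is $0$, with positive eigenfunction $\theta$; since $m-2\theta<m-\theta$ pointwise (as $\theta>0$), strict monotonicity of the principal eigenvalue in the zeroth-order coefficient gives that $\mu\Delta + (m-2\theta)$ has negative principal eigenvalue, hence is invertible, and the implicit function theorem then shows $\mu\mapsto\theta(\cdot\,;\mu)$ is smooth, so $\theta_\mu$ exists, is smooth, and satisfies $\partial\theta_\mu/\partial n=0$ on $\partial\Omega$. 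Differentiating $\mu\Delta\theta+\theta(m-\theta)=0$ in $\mu$ gives $\Delta\theta+\mu\Delta\theta_\mu+(m-2\theta)\theta_\mu=0$; multiplying by $\mu$ and rearranging yields the linear problem for $w$,
\[
\mu\Delta w + (m-2\theta)(w-\theta)=0 \ \text{ in }\Omega, \qquad \frac{\partial w}{\partial n}=0 \ \text{ on }\partial\Omega,
\]
that is, writing $c(x):=m(x)-2\theta(x)$ and $L:=\mu\Delta+c$, one has $Lw=c\,\theta$.

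The second step, where hypothesis \ref{conditionc} enters decisively, is the sign of $c$. By Lemma \ref{maximum} and positivity of $m$ we have $\theta>\min_{\bar\Omega} m$ on $\bar\Omega$, so $2\theta(x)>2\min_{\bar\Omega} m\ge\max_{\bar\Omega} m\ge m(x)$, whence $c\equiv m-2\theta<0$ on $\bar\Omega$. Thus $L$ has a strictly negative zeroth-order coefficient, and the weak and strong maximum principles, together with the Hopf boundary-point lemma, apply to $L$ without any sign restriction.

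The third step is a comparison of $w$ with the constants $M(\mu)=\max_{\bar\Omega}\theta$ and $S(\mu)=\min_{\bar\Omega}\theta$. For a constant $k$ one computes $L(w-k)=c\theta-ck=c(\theta-k)$. Taking $k=M$: since $c<0$ and $\theta-M\le 0$, $L(w-M)\ge 0$, so $w-M$ is an $L$-subsolution; a positive maximum of $w-M$ over $\bar\Omega$ cannot occur at an interior point (there $\Delta(w-M)\le 0$ and $c(w-M)<0$ would force $L(w-M)<0$) nor on $\partial\Omega$ (Hopf's lemma would give $\partial(w-M)/\partial n>0$, against the Neumann condition), so $w\le M$; and were $w(x)=M$ somewhere, the strong maximum principle would force $w\equiv M$, hence $c(\theta-M)\equiv 0$ and $\theta\equiv M$ constant, contradicting that $m$ is non-constant. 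Therefore $w<M$ on $\bar\Omega$. Taking $k=S$ instead gives $L(w-S)=c(\theta-S)\le 0$, so $w-S$ is an $L$-supersolution and the symmetric argument yields $w>S$ on $\bar\Omega$; this is \eqref{doublerelationship}. Finally (i) and (ii) follow by evaluating at an extremizer: if $\bar x$ is a global maximum point of $\theta$, then $M+\mu\theta_\mu(\bar x)=w(\bar x)<M$ gives $\theta_\mu(\bar x)<0$, and if $\bar x$ is a global minimum point, then $S+\mu\theta_\mu(\bar x)=w(\bar x)>S$ gives $\theta_\mu(\bar x)>0$.

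As for difficulty: the derivation of the equation for $w$ is a routine differentiation once one fixes on the combination $\theta+\mu\theta_\mu$, and the maximum-principle steps are standard. The real content is the observation that \ref{conditionc} is precisely what makes $m-2\theta$ one-signed on $\bar\Omega$; this is the crux, and without it there is no maximum principle available for $w$ — the coefficient $c$ changes sign, and the analogous conclusion for $S(\mu)$ can genuinely fail, as the He--Ni example shows. The only other point requiring a little care is the smooth dependence of $\theta$ on $\mu$, which is nonetheless standard given uniqueness and non-degeneracy.
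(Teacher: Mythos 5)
Your proposal is correct and follows essentially the same route as the paper: both differentiate \eqref{EQmain} in $\mu$, form $\theta+\mu\theta_\mu-K$ for $K=\max_{\bar\Omega}\theta$ or $\min_{\bar\Omega}\theta$, observe that \ref{conditionc} together with Lemma \ref{maximum} forces $m-2\theta<0$, and conclude via the strong maximum principle and Hopf's boundary point lemma. The only addition is your explicit justification of the smooth dependence of $\theta$ on $\mu$, which the paper takes as standard.
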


\begin{proof}

 Denote $\partial \theta/{\partial \mu}$ by $\theta_{\mu}$.
Differentiating  \eqref{EQmain} with respect to $\mu,$
we derive
	\begin{equation}\label{equation31-0}\begin{cases}
	\mu \Delta\theta_{\mu}+(m-2\theta)\theta_{\mu}=-\Delta\theta
\quad &\mbox{in}\ \Omega,\\
	\frac{\partial\theta_{\mu}}{\partial n}=0
\quad &\mbox{on}\ \partial\Omega.
	\end{cases}\end{equation}

 For any constant $K$, set
$$
v:=\theta+\mu\theta_{\mu}-K.
$$
By direct computation and applying \eqref{EQmain} and \eqref{equation31-0},
$v$ satisfies
$$\begin{cases}
\mu \Delta v+(m-2\theta)v=(m-2\theta)(\theta-K) \quad \mbox{in}\ \Omega,
\\
\frac{\partial v}{\partial n}=0 \quad \mbox{on}\ \partial\Omega.
\end{cases}
$$

By assumption {\rm\ref{conditionc}} and Lemma \ref{maximum}, we have
\begin{equation}\label{eq:2-1-1}
\max m\le 2\min m<2\theta \quad \mbox{in}\ \bar\Omega;
\end{equation}
i.e.  $m-2\theta<0$ holds in $\bar\Omega$.

We first establish $\theta +\mu \theta_{\mu}<\max_{\bar\Omega} \theta$
in $\bar\Omega$.
 For this case, choosing $K=\max_{\bar\Omega} \theta$
we see that
$v$ satisfies
\begin{equation}\label{eq:2-1-2}
\mu \Delta v+(m-2\theta)v=(m-2\theta)(\theta-\max_{\bar\Omega} \theta)\ge 0
 \quad \mbox{in}\ \Omega,
\end{equation}
where the last inequality follows from Lemma \ref{maximum}
and \eqref{eq:2-1-1}.

It suffices to show $v<0$ in $\bar\Omega$:
We argue by contradiction and suppose that
the maximum of $v$ is non-negative, and it is attained at some
$\bar{x}\in\bar\Omega$.
If $\bar{x}\in\Omega$, by \eqref{eq:2-1-1},
\eqref{eq:2-1-2} and the strong maximum principle \cite{PW},
$v$ is constant in $\Omega$.
Again by  \eqref{eq:2-1-1} and
\eqref{eq:2-1-2}, we have $\theta-\max_{\bar\Omega} \theta=v$
is constant, which is a contradiction as $\theta$ is non-constant.
  Hence we may assume
that $\bar{x}\in\partial\Omega$.
 As $v$ is non-constant in $\Omega$ and $v(\bar{x})\ge 0$,
by the Hopf's boundary point lemma \cite{PW},
$\frac{\partial v}{\partial n}(\bar{x})>0$,
which contradicts the boundary condition of $v$.
Therefore,  there is no non-negative maximum of $v$,
i.e. $\max_{\bar\Omega} v<0$. This proves $\theta +\mu \theta_{\mu}<\max_{\bar\Omega} \theta$
in $\bar\Omega$, from which part (i) follows immediately.

The second part can be proved similarly by choosing $K=\min_{\bar\Omega} \theta$.
	\end{proof}

\medskip

\noindent{\textbf{Proof of Theorem \ref{doublem}.}}
  \quad For  any fixed $\bar{\mu}>0$, we prove that there exists some $\delta>0$ such that
	$$M(\bar{\mu})<M(\mu), \quad \mu\in(\bar{\mu}-\delta, \bar{\mu}).
$$
Let $\bar{x}$ be a global maximum point of $\theta(\cdot; \bar{\mu})$. Then by Lemma \ref{lemmafor}, we have
$$\theta_{\mu}<0 \hspace{0.5ex}\quad \text{at} \hspace{0.5ex} x=\bar{x},
\quad \mu=\bar{\mu}.$$
By the continuity of $\theta_{\mu}$,  there exists some small $\delta>0$ such that
$$\theta_{\mu}<0\quad  \hspace{0.5ex}\text{for} \hspace{0.5ex} |x-\bar{x}|\leq \delta,
\ |\mu-\bar{\mu}|\leq \delta.$$
Thus 
$$\theta(x;\bar{\mu})<\theta(x,\mu) \quad \hspace{0.5ex}\text{for} \hspace{0.5ex} |x-\bar{x}|\leq \delta, \ -\delta<\mu-\bar{\mu}<0.$$
In particular,
$$M(\bar{\mu})=\theta(\bar{x};\bar{\mu})<\theta(\bar{x};\mu)\le M(\mu)\quad  \hspace{0.5ex}\text{for} \hspace{0.5ex} -\delta<\mu-\bar{\mu}<0.$$
This proves the assertion. Hence $\mu\longmapsto M(\mu)$ is strictly decreasing for $\mu\in (0,\infty)$.
By using the same method, we can show that $\mu\longmapsto S(\mu)$ is strictly increasing for $\mu\in (0,\infty)$.
\hfill$\Box$

\medskip

For the rest of this section
we illustrate that $\min_{\bar\Omega}\theta$ is not necessarily monotone increasing in $\mu$.
This is due to  \cite{HeXiaoqing-NiWeiMing2016}.
To this end we focus on the case of sufficiently large $\mu$.
For convenience, we consider
\begin{equation}\label{LiRui-41-1}
\begin{cases}
\Delta u + \lambda u(m(x)-u)=0 \hspace{0.5em} &\mbox{in} \ \Omega, \\
u>0 \hspace{0.5em} &\mbox{in} \ \Omega, \\
\frac{\partial u}{\partial n}=0\hspace{0.5em} &\mbox{on}\ \partial\Omega,
\end{cases}
\end{equation}
where  $\lambda=1/\mu$ and $u(x; \lambda)=\theta(x; \mu)$.

We first state
the following result
of He and Ni (Proposition 3.1, \cite{HeXiaoqing-NiWeiMing2016}).
For the convenience of readers we include a proof.

\begin{lemma}\label{lem400}
	Suppose $m$ satisfies {\rm\ref{conditions}}. Then there exists a family of positive solutions $u=u(x; \lambda)$
of \eqref{LiRui-41-1} which
 are smooth in $\lambda$ for $|\lambda|\ll1$. Moreover,
	$$u(x; \lambda)=\bar{m}+\lambda (C(m)+\rho_{m}(x))+O(|\lambda|^2)$$
	as $|\lambda|\rightarrow 0$,
 where $\rho_{m}$ and $C(m)$ are uniquely determined by
	\begin{equation} \label{LiRui-43rho}\begin{cases}
	\Delta \rho_{m} + \bar{m}(m(x)-\bar{m})=0 \text{ in }\Omega,\\
	\frac{\partial \rho_{m}}{\partial n}=0  \text{ on }\partial\Omega, \\
    \int_{\Omega}\rho_{m} dx=0,\\
	C(m)=-\frac{1}{\bar{m}|\Omega|}\int_{\Omega}(m-\bar{m})\rho_{m} dx=\frac{\int_{\Omega}|\nabla\rho_{m}|^2 dx}{\bar{m}^{2}|\Omega|}.
	\end{cases}\end{equation}
\end{lemma}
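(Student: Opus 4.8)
The plan is to treat $\lambda$ as a bifurcation parameter and construct the solution branch by a Lyapunov--Schmidt reduction, since the whole difficulty is that at $\lambda=0$ the equation degenerates to $\Delta u=0$ with Neumann data, whose solutions form the entire line of constants; in particular the linearized operator $\Delta$ is not invertible on $C^{2,\alpha}(\bar\Omega)$, so a naive implicit function theorem does not apply. To quotient out this kernel I would fix $\alpha\in(0,1)$, work with $X=\{u\in C^{2,\alpha}(\bar\Omega):\partial u/\partial n=0\}$ and $Y=C^{0,\alpha}(\bar\Omega)$, and split each into constants plus the mean-zero subspace, writing $u=\xi+w$ with $\xi\in\mathbb{R}$ and $\int_\Omega w\,dx=0$. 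Because $\int_\Omega\Delta u\,dx=\int_{\partial\Omega}\partial u/\partial n=0$ under the Neumann condition, projecting \eqref{LiRui-41-1} onto the mean-zero part of $Y$ and onto the constants yields the equivalent system
\[
\Delta w+\lambda P_0\big[(\xi+w)(m-\xi-w)\big]=0,\quad \int_\Omega(\xi+w)(m-\xi-w)\,dx=0,
\]
where $P_0$ is the projection onto mean-zero functions; the scalar equation is the bifurcation equation.

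First I would solve the mean-zero equation. The map $G(w,\xi,\lambda):=\Delta w+\lambda P_0[(\xi+w)(m-\xi-w)]$ carries mean-zero $C^{2,\alpha}$ to mean-zero $C^{0,\alpha}$, satisfies $G(0,\xi,0)=0$, and has $D_wG(0,\xi,0)=\Delta$, an isomorphism between the two mean-zero subspaces (the only Neumann-harmonic functions being constants). The implicit function theorem then gives a unique $w=w(\xi,\lambda)$, smooth near $(\bar m,0)$, with $w(\xi,0)\equiv0$. Differentiating $G=0$ in $\lambda$ at $\lambda=0$ gives $\Delta w_\lambda(\xi,0)=-P_0[\xi(m-\xi)]=-\xi(m-\bar m)$, so at $\xi=\bar m$ we obtain exactly $w_\lambda(\bar m,0)=\rho_m$ from the defining problem \eqref{LiRui-43rho}; differentiating $w(\xi,0)\equiv0$ in $\xi$ gives $w_\xi(\bar m,0)=0$.

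Next I would solve the scalar equation. Substituting $w(\xi,\lambda)$ defines $\Phi(\xi,\lambda):=\int_\Omega(\xi+w)(m-\xi-w)\,dx$ with $\Phi(\xi,0)=\xi|\Omega|(\bar m-\xi)$, whose positive root is $\xi=\bar m$ (here one uses $\bar m>0$, consistent with the displayed formulas). Since $\partial_\xi\Phi(\bar m,0)=|\Omega|(\bar m-2\bar m)=-\bar m|\Omega|\neq0$, the implicit function theorem produces a smooth $\xi=\xi(\lambda)$ with $\xi(0)=\bar m$ and $\Phi(\xi(\lambda),\lambda)=0$. Then $u(x;\lambda):=\xi(\lambda)+w(\xi(\lambda),\lambda)$ solves \eqref{LiRui-41-1} and depends smoothly on $\lambda$; since $u\to\bar m>0$ uniformly it is positive for $|\lambda|\ll1$, and for $\lambda>0$ it coincides with $\theta(\cdot;1/\lambda)$ by the uniqueness in Lemma \ref{maximum}. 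Using $w(\bar m,0)=0$, $w_\xi(\bar m,0)=0$ and $w_\lambda(\bar m,0)=\rho_m$, the chain rule gives $u(x;\lambda)=\bar m+\lambda(\xi'(0)+\rho_m(x))+O(|\lambda|^2)$, so $C(m)=\xi'(0)$.

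Finally I would identify $C(m)$ from the solvability constraint. Since $\int_\Omega\Delta u\,dx=0$, every solution satisfies $\int_\Omega u(m-u)\,dx=0$; inserting the expansion and collecting the $O(\lambda)$ term gives $-C(m)\,\bar m|\Omega|+\int_\Omega\rho_m(m-\bar m)\,dx=0$, hence $C(m)=\frac{1}{\bar m|\Omega|}\int_\Omega(m-\bar m)\rho_m\,dx$. Multiplying the equation for $\rho_m$ by $\rho_m$ and integrating by parts (the boundary term vanishing by the Neumann condition) yields $\int_\Omega(m-\bar m)\rho_m\,dx=\frac{1}{\bar m}\int_\Omega|\nabla\rho_m|^2\,dx$, which gives the equivalent expression $C(m)=\int_\Omega|\nabla\rho_m|^2\,dx/(\bar m^2|\Omega|)$ and in particular $C(m)>0$. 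The only genuine obstacle is the degeneracy at $\lambda=0$; once it is resolved by the Lyapunov--Schmidt splitting, every remaining step is a transversal implicit-function-theorem argument, and the expansion coefficients drop out of the two solvability conditions.
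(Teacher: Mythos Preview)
Your proof is correct and follows essentially the same Lyapunov--Schmidt strategy as the paper: both split $u$ into a constant plus a mean-zero Neumann function and invoke the implicit function theorem near $(\bar m,0)$, the only cosmetic difference being that the paper packages the PDE and the scalar constraint into a single map $F(w,s,\lambda):X\times\mathbb{R}\times\mathbb{R}\to Y\times\mathbb{R}$ and checks invertibility of $D_{(w,s)}F(0,\bar m,0)$ in one shot, while you carry out the classical two-step reduction (first $w(\xi,\lambda)$, then $\xi(\lambda)$). Your explicit derivation of $C(m)$ from the $O(\lambda)$ solvability condition and the integration-by-parts identity for $\rho_m$ actually spells out a step the paper leaves implicit.
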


\begin{proof}
	For $\alpha\in (0, 1)$, set
	$$
\aligned
C^{2,\alpha}_{N}(\overline{\Omega})&=\{u\in C^{2,\alpha}(\overline{\Omega}):
\frac{\partial u}{\partial n}=0\text{ on }\partial\Omega\};
\\
X&=\{u\in C^{2,\alpha}_{N}(\overline{\Omega}): \int_{\Omega} udx=0\}; 
\\
Y&=\{u\in C^{\alpha}(\overline{\Omega}): \int_{\Omega} udx=0\}.
\endaligned
$$
	
Set $u=w+s$, $w\in X$, $s\in\R$. Then equation \eqref{LiRui-41-1} is reduced to solving
	$$\Delta w + \lambda (w+s)(m(x)-w-s)=0\text{ in }\Omega,
\quad \frac{\partial w}{\partial n} =0\text{ on }\partial\Omega$$
	with $$\int_{\Omega}(w(x)+s)(m(x)-w(x)-s)dx=0.$$
	
Define $F:X\times\R\times\R\rightarrow Y\times\R$ by
	$$F(w,s,\lambda)=\left(\begin{array}{c} \Delta w + \lambda (w+s)(m(x)-w-s) \\   \int_{\Omega}(w(x)+s)(m(x)-w(x)-s)dx \end{array}\right).$$
Clearly  $(u, \lambda)$ is a solution of \eqref{LiRui-41-1} with $\lambda\neq0$ if and only if
	$$w=u-s, \ s=\frac{1}{|\Omega|}\int_{\Omega}udx, \ \lambda\neq0$$
	is a solution to $F=0$.
	
	We next verify the nonsingular condition of $F$ with respect to the variables $(w,s)$ at $(w,s,\lambda)=(0,\bar{m},0)$.
	Note that $F(0,\bar{m},0)=0$ and the Fr\'echet
derivative $D_{(w,s)}F$ at $(w,s,\lambda)=(0,\bar{m},0)$,  denoted by $L:X\times\R\rightarrow Y\times\R$, is given by
	$$L(\phi,\tau)=\left(\begin{array}{c} \Delta \phi \\   \int_{\Omega}(m(x)-2\bar{m})(\phi+\tau)dx  \end{array}\right).$$
	
To show that $L$ is nonsingular, we consider the equation $L(\phi,\tau)=(f,h)\in Y\times\R$:
	\begin{equation} \label{LiRui-44}\begin{cases}\Delta \phi=f\text{ in }\Omega,\\
  \frac{\partial\phi}{\partial n}=0\text{ on }\partial\Omega,\\
    \int_{\Omega}\phi dx=0,\\
	\int_{\Omega}(m(x)-2\bar{m})(\phi+\tau)dx=h.
\end{cases}\end{equation}
	As $f\in Y$, we see that $\phi\in X$ exists and is unique, and
	$$\tau=\frac{1}{\bar{m}|\Omega|}\left[\int_{\Omega}m\phi dx-h\right].$$
	Thus $L$ is surjective and is nonsingular by the Fredholm alternative. Therefore, the
implicit function theorem can be applied to show that near $(w,s,\lambda)=(0,\bar{m},0)$,
the solution to $F=0$ is uniquely given by
	$$w=w(\lambda),\ s=s(\lambda), \quad |\lambda|<\delta$$
	 for some $\delta>0$.
	 Now we turn to determine the limiting behavior near $\lambda=0$ (i.e. large diffusion).
	Note that $u|_{\lambda=0}=\bar{m}$, and $u_{\lambda}$, the derivative of $u$
 with respect to $\lambda$, satisfies
\begin{equation}
\begin{cases}
\Delta u_{\lambda} + \lambda (m-2u)u_{\lambda} + u(m(x)-u)=0 \quad \mbox{in}\ \Omega,\\
\frac{\partial u_\lambda}{\partial n}=0 \quad \mbox{on}\ \partial\Omega,\\
\int_{\Omega}(m(x)-2u)u_{\lambda}dx=0.
\end{cases}
\end{equation}
	Set $\lambda=0$, we know $u_{\lambda}|_{\lambda=0}=C(m)+\rho_{m}$,
 where $\rho_{m}$ and $C(m)$ are defined in \eqref{LiRui-43rho}. 
\end{proof}
By the proof of Lemma \ref{lem400} and Proposition 1.5 in \cite{HeXiaoqing-NiWeiMing2016},
there exists a $C^{1}$ function $m$ such that $C(m)+\rho_{m}(x)>0$ in $\overline{\Omega}$, i.e.
$$u_{\lambda}(x;0)=C(m)+\rho_{m}(x)>0,\ x\in\overline{\Omega}.$$
In particular,
$u_{\lambda}(x;\lambda)>0$ for all $x\in\overline{\Omega}$
and $|\lambda|\ll1.$
This implies that $\min_{x\in\bar\Omega}\theta(x; \mu)$ is
strictly decreasing for sufficiently large $\mu$.

\section{\bf One dimensional domain}
	In this section, we always
assume that the domain $\Omega$ is an open bounded interval. Without loss of generality
we assume that $\Omega=(0,1)$. Our goal is to prove Theorems \ref{thmmono}
and \ref{deincreasing}.

For one-dimensional domain, \eqref{equation31-0} can be rewritten as
	\begin{equation}\label{equation31}\begin{cases}
	\mu \theta_{\mu}''+(m-2\theta)\theta_{\mu}=-\theta'', \quad 0<x<1, \\
	\theta'_{\mu}(0)=\theta'_{\mu}(1)=0.
	\end{cases}\end{equation}
To study \eqref{equation31}, we consider the problem
\begin{equation}\label{general2}\begin{cases}
v''+c(x)v=h', \quad 0\le x\le t_0,\\
v(0)=v'(0)=0,
\end{cases}\end{equation}
where $t_0$ is some positive constant.

\begin{lemma}\label{gelem}
	Assume $c(x)\in C[0,t_0], t_0>0$, $0\leq h(x)\in C^{1}[0,t_0], h(0)=0, h(x)\not\equiv0$ in any neighborhood of $x=0$. If $v$ is the solution of \eqref{general2}, then for some $\delta>0$,
	$$v(x)>0,\quad  x\in(0,\delta].$$
\end{lemma}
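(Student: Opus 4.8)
The plan is to turn the second-order problem \eqref{general2} into an equivalent Volterra integral equation and then show that the (nonnegative) forcing term dominates the correction. Since $v(0)=v'(0)=0$ and $h(0)=0$, integrating $v''+c(x)v=h'$ once gives $v'(x)=h(x)-\int_0^x c(t)v(t)\,dt$, and integrating again (interchanging the order of integration) yields
\[
v(x)=H(x)-\int_0^x (x-t)\,c(t)\,v(t)\,dt,\qquad H(x):=\int_0^x h(s)\,ds .
\]
Here $H$ is nondecreasing and $H\ge 0$ because $h\ge 0$. Moreover, since $h$ is continuous and is not identically $0$ in any neighborhood of $0$, for every $x>0$ there is a point of $(0,x)$ at which $h>0$, and hence $H(x)>0$ for all $x\in(0,t_0]$. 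This positivity of $H$ is exactly where the hypotheses on $h$ are used.

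Next I would establish a crude a priori bound on $|v|$. Set $M:=\max_{[0,t_0]}|c|$ and work on $[0,\delta]$ with $\delta\le t_0$ to be fixed later. From the integral equation, using $x-t\le x\le\delta$, one gets $|v(x)|\le H(x)+M\delta\int_0^x |v(t)|\,dt$, so Gronwall's inequality (with the nondecreasing majorant $H$) gives $|v(x)|\le H(x)\,e^{M\delta x}\le H(x)\,e^{M\delta^2}$ for $x\in[0,\delta]$. Feeding this back into the integral equation and using again $x-t\le x$ together with the monotonicity $H(t)\le H(x)$ for $0\le t\le x$, the correction term is bounded by $M e^{M\delta^2}\int_0^x (x-t)H(t)\,dt\le M e^{M\delta^2}\delta^2 H(x)$. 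Therefore
\[
v(x)\ \ge\ \bigl(1-M e^{M\delta^2}\delta^2\bigr)\,H(x),\qquad x\in(0,\delta].
\]
Choosing $\delta>0$ small enough that $M e^{M\delta^2}\delta^2<1$ (and $\delta\le t_0$) makes the coefficient positive, and since $H(x)>0$ on $(0,\delta]$ the conclusion $v(x)>0$ follows.

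The only subtlety I anticipate — and the point to be careful about in writing up the estimates — is that $H(x)$ may tend to $0$ arbitrarily fast as $x\to 0^+$, so one must keep $H(x)$ itself as a common factor through every step; replacing $H(x)$ by $H(\delta)$ at any stage of the Gronwall bound or of the correction estimate would destroy the argument. Apart from that, the double integration, the use of Fubini, and the Gronwall step are all routine.
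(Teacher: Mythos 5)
Your proof is correct, and it takes a genuinely different route from the paper's. The paper solves \eqref{general2} by variation of constants with the fundamental system $\psi,\phi$ of $w''+c(x)w=0$ (normalized by $\psi(0)=0,\psi'(0)=1$ and $\phi(0)=1,\phi'(0)=0$), integrates by parts to obtain the representation $v(x)=\int_0^x h(y)\,G(x,y)\,dy$ with $G(x,y)=\phi(x)\psi'(y)-\psi(x)\phi'(y)$, and concludes from $G(0,0)=1$ and continuity that $G>0$ on $[0,\delta]^2$, so the integral is strictly positive as soon as $h\not\equiv 0$ on $[0,x]$. You instead recast the problem as the Volterra equation $v(x)=H(x)-\int_0^x(x-t)c(t)v(t)\,dt$ with $H(x)=\int_0^x h$, and run a Gronwall plus bootstrap argument to show the correction term is at most $Me^{M\delta^2}\delta^2 H(x)$, hence $v\ge (1-Me^{M\delta^2}\delta^2)H(x)>0$ for small $\delta$. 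Both arguments hinge on the same fact (positivity of $H$, equivalently of $\int_0^x h$, forced by $h\ge 0$, $h\not\equiv 0$ near $0$); the paper's is shorter because the Green's-function kernel packages the perturbation estimate into the single statement $G(0,0)=1$, while yours is more self-contained (no Wronskian identity or variation of parameters needed) at the cost of the explicit bookkeeping you rightly flag: keeping $H(x)$ itself, rather than $H(\delta)$, as the common factor throughout is exactly what makes the estimate survive the possibly rapid vanishing of $H$ at $0$.
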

\begin{proof}
	Let $\phi$ and $\psi$ be the solutions of
	$$	\begin{aligned}
	\psi''+c(x)\psi=0, \quad \psi(0)=0, \psi'(0)=1;\\
	\phi''+c(x)\phi=0, \quad \phi(0)=1,\phi'(0)=0.
	\end{aligned}$$
It is easy to get
$$\psi'(y)\phi(y)-\psi(y)\phi'(y)=1.$$
Then by the method of variation of constant, we have
	\begin{equation}\begin{aligned}
	v(x)=&\int_0^{x}h'(y)[\psi(x)\phi(y)-\psi(y)\phi(x)]dy\\
=&\int_0^{x}h(y)G(x,y)dy,
\end{aligned}\end{equation}
where $G(x,y)=\phi(x)\psi'(y)-\psi(x)\phi'(y)$.  Since $G(0,0)=1,$  we get for some $\delta>0,$
$$G(x,y)>0, \hspace{0.5em} x, y\in[0,\delta],$$
then the conclusion is proved.
\end{proof}

\subsection{Monotone function $m$}
The goal of this subsection is to establish Theorem \ref{thmmono}
for monotone $m$.

 \begin{lemma}\label{lemfirst}
Suppose that  $m$ satisfies  {\rm\ref{conditionc}} and {\rm\ref{conditiona}},
then $\theta'(x)\neq 0$ in $(0,1)$ and $\theta''(x)\neq 0$ at \{0,1\}.
More precisely,  if $m^{+}(x)$ is non-decreasing, then $\theta'(x)>0$ in $(0,1)$;
if $m^{+}(x)$ is non-increasing, then $\theta'(x)<0$ in $(0,1)$.	
	\end{lemma}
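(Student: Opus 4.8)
The plan is to differentiate the equation and study $w:=\theta'$ with the maximum principle. First I would reduce to the case $m^{+}$ non-decreasing: under \ref{conditionc} the weight $m$ is positive on $[0,1]$, so $m^{+}=m$ and \ref{conditiona} just says $m$ is monotone; after the reflection $x\mapsto 1-x$ I may assume $m$ non-decreasing, and it then suffices to prove $\theta'>0$ on $(0,1)$ together with $\theta''\neq0$ at the two endpoints. Standard regularity gives $\theta\in C^{3}[0,1]$, and differentiating $\mu\theta''+\theta(m-\theta)=0$ in $x$ yields
\[\mu w''+(m-2\theta)\,w=-m'\theta\quad\text{in }(0,1),\qquad w(0)=w(1)=0,\]
where the boundary conditions come from $\theta'(0)=\theta'(1)=0$. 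The two facts that make this work are: by \ref{conditionc} and Lemma \ref{maximum}, $\theta>\min_{\bar\Omega}m$, so the zeroth-order coefficient satisfies $c:=m-2\theta<\max_{\bar\Omega}m-2\min_{\bar\Omega}m\le0$ on $[0,1]$; and, since $m$ is non-decreasing with $\theta>0$, the right-hand side $-m'\theta$ is $\le0$ (and $\not\equiv0$, as $m$ is non-constant).

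Next I would apply the maximum principle to $w$. Because $c<0$ and $\mu w''+cw\le0$, $w$ cannot attain a negative minimum in the interior (at such a point $w''\ge0$ and $cw>0$ would force $\mu w''+cw>0$), and with $w(0)=w(1)=0$ this gives $w\ge0$ on $[0,1]$. Were $w$ to vanish at some $x_{1}\in(0,1)$, it would attain its minimum $0$ at an interior point as a supersolution of an operator with $c\le0$, so the strong maximum principle \cite{PW} would give $w\equiv0$, i.e.\ $\theta$ constant, contradicting the non-constancy of $m$. Hence $w=\theta'>0$ on $(0,1)$. (Alternatively, near a hypothetical interior zero of $w$ one could invoke Lemma \ref{gelem} applied to $-w$ to produce a point where $w<0$; I would use whichever is cleaner in the write-up.)

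For the endpoints, I would use Hopf's boundary point lemma \cite{PW}: the function $w\ge0$, non-constant on $[0,1]$, attains its minimum value $0$ at $x=0$ and at $x=1$, so (since $c\le0$) $w'(0)>0$ and $w'(1)<0$; that is, $\theta''(0)=w'(0)>0$ and $\theta''(1)=w'(1)<0$, so in particular $\theta''\neq0$ at $\{0,1\}$. (Equivalently, the equation gives $\theta''(0)=-\mu^{-1}\theta(0)\,(m(0)-\theta(0))$, and Hopf is what rules out $m(0)=\theta(0)$.) The non-increasing case follows by the substitution $x\mapsto 1-x$. I do not expect a serious obstacle here: the one essential ingredient is the strict negativity of $m-2\theta$ on $[0,1]$, which is exactly what \ref{conditionc} supplies via Lemma \ref{maximum}; the rest is a routine maximum-principle and Hopf argument, the only thing needing care being to confirm the sign of $c$ and the non-constancy of $w$ before invoking the strong maximum principle and Hopf's lemma.
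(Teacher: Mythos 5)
Your proof is correct for the lemma as stated, but it takes a genuinely different route from the paper's. You differentiate the equation in $x$ and treat $w=\theta'$ as a solution of the linear problem $\mu w''+(m-2\theta)w=-m'\theta$ with $w(0)=w(1)=0$, then run the weak and strong maximum principles plus Hopf's lemma; the two sign facts you need are $m-2\theta<0$ (which is exactly what \ref{conditionc} together with Lemma \ref{maximum} supplies) and $-m'\theta\le 0$, $\not\equiv 0$. The paper instead argues by an elementary shooting/sign analysis: $\theta''(0)=(\theta(0)-m(0))\theta(0)/\mu>0$ forces $\theta'>0$ near $0$; if $\theta'$ first vanished at some $a\in(0,1)$, applying the bound of Lemma \ref{maximum} to the Neumann problem on the subinterval $(0,a)$ gives $\theta(a)<m(a)$, hence $\theta''(a)<0$, and then the monotonicity of $m$ keeps $m-\theta>0$, hence $\theta''<0$ and $\theta'<0$, on $(a,b]$ up to the next zero $b$ of $\theta'$ --- a contradiction. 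The practical difference is that the paper's argument never uses \ref{conditionc}: it only needs \ref{conditions} and \ref{conditiona}. That matters later, because Lemma \ref{lemfirst} is invoked in the proof of Theorem \ref{thmmono} (which assumes only \ref{conditions} and \ref{conditiona}) and again, on subintervals, in the proof of Lemma \ref{once} (under \ref{conditions} and \ref{conditionb}); your maximum-principle proof, which genuinely requires $\max_{\bar\Omega} m\le 2\min_{\bar\Omega} m$ to make the zeroth-order coefficient nonpositive, would not cover those applications. So your argument buys a shorter, more systematic proof of the stated lemma (and directly yields the signs $\theta''(0)>0$, $\theta''(1)<0$ via Hopf), while the paper's buys the weaker hypotheses that the rest of Section 3 actually relies on.
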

\begin{proof}
	Here we only consider the case of non-decreasing
$m^{+}(x)$. We
argue by contradiction and suppose that
$\theta'(x^*)\le 0$  for some $x^*\in (0, 1)$.
	
\medskip
\noindent{}\textbf{Claim}.
There exists some $a\in (0, 1)$ such that  $\theta'(a)=0$
and $\theta'(x)>0$ for $x\in (0,a)$.
\medskip
	
By \eqref{EQmain} we have
	$$\theta''(0)=\frac{(\theta(0)-m(0))\theta(0)}{\mu}.$$
    Since $m$ is non-decreasing, by Lemma \ref{maximum}
     we have $m(0)=\min_{x\in[0,a]} m(x)<\theta(0)$, i.e.
     $\theta''(0)>0$.
     As $\theta'(0)=0$,
      we obtain $\theta'(x)>0$ for $x>0$ small.
      Recall that $\theta'(x^*)\le 0$  for some $x^*\in (0, 1)$.
      Let $a\in (0, 1)$ denote the smallest positive root
      of $\theta'(x)=0$. This proves the assertion.
    	
Next, we consider
	\begin{equation}\label{EQinterval}\begin{cases}
	\mu\theta''+\theta(m(x)-\theta)=0, \hspace{1em} x\in (0,a),\\
	\theta'(0)=\theta'(a)=0.
	\end{cases}\end{equation}
By Lemma \ref{maximum} we conclude
$$\min_{[0,a]} m^{+}< \theta(x) <\max_{[0,a]} m.$$
In particular,  $\theta(a)<\max_{x\in[0,a]} m(x)=m(a)$,
which implies that  $\theta''(a)<0$.
 By the continuity,  $\theta'(x)<\theta'(a)=0$ in $(a, a+\epsilon]$
 for some small $\epsilon>0$.
 Since $\theta'(1)=0$, there exists some
 $b\in (a+\epsilon, 1]$ such that $\theta'(b)=0$
 and $\theta'(x)<0$ in $(a, b]$.
  As $m$ is non-decreasing,  $m(x)-\theta(x)>0$ for all $x\in (a,b]$. Therefore $\theta''(x)<0$ and
  thus $\theta'(x)<0$ for all $x\in(a,b]$, which contradicts $\theta'(b)=0$.
   This proves $\theta'(x)>0$ for all $x\in (0,1)$.
	\end{proof}

\medskip

\noindent{\textbf{Proof of Theorem \ref{thmmono}}}.  By Lemma \ref{lemfirst}, when $m^{+}(x)$ is non-decreasing in $(0,1)$, we have $\theta'(x;\mu)>0$
for $x\in (0, 1)$ and $\mu>0$.

Multiplying  \eqref{equation31} and \eqref{EQmain} by $\theta$ and $\theta_{\mu}$ respectively, we get
\begin{equation}\label{equ32}\mu(\theta'_{\mu}\theta-\theta'\theta_{\mu})'=-\theta\theta''+\theta^{2}\theta_{\mu}.\end{equation}
 Integrating \eqref{equ32} over [0,1],  by $\theta'_{\mu}=\theta'=0$ at $x=0,1,$ we obtain
 \begin{equation}
 0=\int_0^{1} [-\theta\theta''+\theta^{2}\theta_{\mu}]dx,
 \end{equation}
 i.e.
 \begin{equation}\label{equationa}-\int_0^{1} \theta^{2}\theta_{\mu}dx=\int_0^{1}(\theta')^{2}dx.\end{equation}
This shows that $\theta_{\mu}$  must be negative at some point in $[0,1],$ since  $\theta$ is
a non-constant positive function in $x$.

Now we turn to prove that $\theta_{\mu}(1;\mu)<0.$ If this were false, then $\theta_{\mu}(1;\mu)\geq 0.$ Thus, in the case $\theta_{\mu}(1;\mu)>0,$ $\theta_{\mu}>0$ in some neighborhood of $x=1$ by continuity.
For the case $\theta_{\mu}(1;\mu)=0,$ evaluating \eqref{equ32} at $x=1$
and applying \eqref{EQmain} yields
$$\theta''_{\mu}(1)=-\frac{\theta''(1)}{\mu}=\frac{[m(1)-\theta(1)]\theta(1)}{\mu^{2}}>0,$$
 where the last inequality follows from
 $m(1)=\max m>\max \theta=\theta(1)$. Hence in both cases above,
  $\theta_{\mu}$ is positive in a left neighborhood of $x=1.$
  If $\theta_{\mu}$ has a zero in $[0,1),$ then there exists some $x_0<1$ such that
$$\theta_{\mu}(x)>0, x\in (x_0, 1), \quad \theta_{\mu}(x_0)=0, \quad \theta'_{\mu}(x_0)\geq 0.$$

Now integrating \eqref{equ32} over $(x_0, 1),$ we conclude
$$0\geq-\mu\theta'_{\mu}(x_0)\theta(x_0)=\theta(x_0)\theta'(x_0)+\int_{x_0}^{1}(\theta'^2+ \theta^2 \theta_{\mu})dx>0,$$
which is a contradiction. Thus the zero of $\theta_{\mu}$ in [0,1) cannot exist. So
 $\theta_{\mu}(x)>0$ for $x\in (0,1),$
which contradicts \eqref{equationa}.
Therefore, $\theta_{\mu}(1; \mu)<0$ for any $\mu>0$.
The rest of the proof is the same as that of Theorem \ref{doublem}
and is thus omitted.

 The case of non-increasing $m$ can be proved similarly. \hfill $\Box$

 \subsection{The case  of increasing-decreasing $m$}
 \begin{lemma}\label{once}
  Assume that $m$ satisfies {\rm\ref{conditions}} and {\rm \ref{conditionb}}. 
  Then $\theta'$ change signs at most once.
Furthermore, 
the interior critical of $\theta$, whenever it exists, is the unique,
non-degenerate local maximum (and thus the unique global maximum).
\end{lemma}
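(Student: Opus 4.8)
The plan is to work with $w:=\theta'$, which after differentiating \eqref{EQmain} satisfies the linear problem
\[
\mu w'' + (m-2\theta)w = -\theta\,m' \quad\text{in }(0,1),\qquad w(0)=w(1)=0 ,
\]
and whose right--hand side $g:=-\theta m'$ is, by {\rm\ref{conditionb}} together with $\theta>0$, strictly negative on $(0,\rho)$, zero at $\rho$, and strictly positive on $(\rho,1)$. The key analytic device is the maximum principle for the operator $\mathcal L\phi:=\mu\phi''+(m-2\theta)\phi$. Even though $m-2\theta$ need not be negative under {\rm\ref{conditionb}}, the solution $\theta>0$ is a \emph{strict} positive supersolution: by \eqref{EQmain}, $\mathcal L\theta=\mu\theta''+(m-2\theta)\theta=-\theta^2<0$. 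Hence, writing $\phi=\theta v$ one computes $\mathcal L\phi=\tfrac{\mu}{\theta}(\theta^2 v')'-\theta^2 v$, so $\mathcal L\phi\ge 0$ is equivalent to $(\theta^2 v')'\ge \theta^3 v/\mu$, and evaluating at a positive interior maximum of $v$ gives a contradiction. This yields the maximum principle for $\mathcal L$ on $[0,1]$ and on every subinterval: if $\mathcal L\phi\ge 0$ on an open subinterval $I$ and $\phi\le 0$ on $\partial I$ then $\phi\le 0$ on $\bar I$, and symmetrically with all inequalities reversed.

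Next I would record two pointwise facts, both coming from \eqref{EQmain} and Lemma \ref{maximum}. First, since {\rm\ref{conditionb}} forces $m(\rho)=\max_{\bar\Omega}m$ while Lemma \ref{maximum} gives $\theta(\rho)<\max_{\bar\Omega}m$, any vanishing of $\theta'$ at $\rho$ comes with $\theta''(\rho)=\theta(\rho)(\theta(\rho)-m(\rho))/\mu<0$; thus $\rho$, if critical, is automatically a non-degenerate local maximum. Second, differentiating \eqref{EQmain} once more, at any $x_0\in(0,1)$ with $x_0\ne\rho$ and $\theta'(x_0)=0$ one has $\mu\theta'''(x_0)=-\theta(x_0)m'(x_0)\ne0$; hence $\theta'$ has only simple sign-changing zeros away from $\rho$, so on any subinterval where $\theta'$ has constant sign it has no interior zero at all (except possibly at $\rho$), and every interior zero $\ne\rho$ at which $\theta'$ changes sign is a non-degenerate extremum of $\theta$ (a maximum iff $\theta''<0$ there).

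With these in hand the argument splits on the sign of $\theta'(\rho)$, and by the reflection $x\mapsto 1-x$ (which preserves {\rm\ref{conditionb}}, sends $\rho\mapsto 1-\rho$, sends $\theta'(\rho)$ to $-\hat\theta'(1-\rho)$, and leaves the statement of the Lemma invariant) we may assume $\theta'(\rho)\ge0$. If $\theta'(\rho)=0$: then $w$ vanishes at both ends of $[0,\rho]$ and of $[\rho,1]$, so applying the maximum principle on $[0,\rho]$ (where $\mathcal L w=g\le0$) gives $w\ge0$ there, and on $[\rho,1]$ (where $\mathcal L w=g\ge0$) gives $w\le0$ there; since $\theta$ is non-constant, the second pointwise fact upgrades this to $\theta'>0$ on $(0,\rho)$ and $\theta'<0$ on $(\rho,1)$, so $\rho$ is the unique interior critical point and it is the non-degenerate global maximum. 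If $\theta'(\rho)>0$: the maximum principle on $[0,\rho]$ (boundary data $w(0)=0$, $w(\rho)>0$, and $\mathcal L w=g\le0$) gives $w\ge0$, hence $\theta'>0$ on $(0,\rho]$; if $\theta'$ has no zero in $(\rho,1)$ then $\theta'>0$ on $(0,1)$ and $\theta$ is strictly increasing (no interior critical point, nothing more to prove), and otherwise, letting $z_0\in(\rho,1)$ be the smallest zero of $\theta'$, we have $\theta'>0$ on $(0,z_0)$ and, applying the maximum principle on $[z_0,1]$ (boundary data $w(z_0)=w(1)=0$, $\mathcal L w=g\ge0$), $w\le0$ on $[z_0,1]$; the second pointwise fact then gives $\theta'<0$ on $(z_0,1)$ and $\theta''(z_0)<0$, so $\theta$ is increasing--decreasing with $z_0$ its unique interior critical point, a non-degenerate global maximum. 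In every case $\theta'$ changes sign at most once.

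The step needing the most care is the maximum principle: one must be certain it genuinely applies to $\mathcal L$ despite $m-2\theta$ being possibly positive somewhere — this is precisely why the strict positive supersolution $\theta$ is invoked — and one must keep track of which subinterval inherits clean homogeneous Dirichlet data for $w$, which is the reason for isolating the first zero $z_0$ of $\theta'$ in $(\rho,1)$ instead of trying to run the principle on $[\rho,1]$ directly when $\theta'(\rho)>0$. The secondary technical point, handled by the formula for $\theta'''$ at critical points $\ne\rho$, is to exclude degenerate zeros of $\theta'$ and to secure non-degeneracy of the resulting maximum, as both are needed for the "at most once" count and for the last clause of the statement.
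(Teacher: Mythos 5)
Your argument is correct, but it is a genuinely different route from the paper's. The paper reduces to the monotone case: it applies Lemma \ref{lemfirst} on subintervals $(0,x_1)$ and $(x_3,1)$ cut off by consecutive zeros of $\theta'$ (where $m$ is monotone and $\theta'$ vanishes at both ends) to show $\theta'$ has at most one zero in $(0,\rho]$ and at most one in $[\rho,1)$, and then patches the two pieces together at $\rho$ by tracking the sign of $\theta'$ just past $\rho$. You instead linearize: $w=\theta'$ solves $\mu w''+(m-2\theta)w=-\theta m'$ with $w(0)=w(1)=0$, the forcing has a definite sign on each side of $\rho$ by \ref{conditionb}, and a generalized maximum principle for $\mu\phi''+(m-2\theta)\phi$ — legitimately available because $\theta$ itself is a positive strict supersolution, $\mu\theta''+(m-2\theta)\theta=-\theta^2<0$ — pins down the sign of $w$ on each relevant subinterval; non-degeneracy then falls out of the pointwise identities for $\theta''$ and $\theta'''$ at critical points. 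Your version is more systematic (it does not need Lemma \ref{lemfirst} at all), it isolates exactly why \ref{conditionb} is the right hypothesis (only the sign of $m'$ enters), and it delivers the non-degeneracy of the maximum with no extra work; the paper's version is more elementary and reuses machinery already built for Theorem \ref{thmmono}. One small imprecision to fix in your write-up: the computation $\mu\theta'''(x_0)=-\theta(x_0)m'(x_0)\neq0$ at a critical point $x_0\neq\rho$ does \emph{not} show that all such zeros of $\theta'$ are simple and sign-changing — it only excludes zeros of order $\ge 3$, and an order-two zero is non-sign-changing with local sign equal to that of $\theta'''(x_0)=-\theta(x_0) m'(x_0)/\mu$. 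Fortunately, the only two places you invoke this (upgrading $w\ge0$ to $w>0$ on $(0,\rho)$ where $m'>0$, and $w\le0$ to $w<0$ on $(z_0,1)$ where $m'<0$) are precisely the sign combinations in which the quadratic Taylor term contradicts the weak sign of $w$, so the proof stands once that step is stated for those two cases rather than as a blanket claim.
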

\begin{proof}
If $\theta$ has no critical points in $(0, 1)$, then either $\theta'(x)>0$
  for all $x\in (0, 1)$ or $\theta'(x)<0$
  for all $x\in (0, 1)$. Hence it suffices to assume that
  $\theta'$ has at least one zero in $(0, 1)$.

\medskip
\noindent{\bf  Claim}. $\theta'$ has at most one zero in $(0, \rho]$
  and at most one zero in $[\rho, 1)$.
\medskip

  To establish our assertion, we argue by contradiction: Suppose that there
  exist $x_1, x_2$ such that $0<x_2<x_1\le \rho$ and
  $\theta'(x_1)=\theta'(x_2)=0$.
  Observe that $\theta$ satisfies
  $$\mu\theta''+(m-\theta)\theta=0 \quad \ \mbox{in}\ (0, x_1),
  \quad \hspace{2ex} \theta'(0)=\theta'(x_1)=0.$$
  As $m$ is increasing in $(0, x_1)$,
  by Lemma \ref{lemfirst} we obtain $\theta'(x)>0$
  for all $x\in (0, x_1)$, which contradicts $\theta'(x_2)=0$.
  Similarly, $\theta'$ has at most one zero
   in $[\rho, 1)$.

  If $\theta'$ has a zero in $(0,\rho]$, denote the unique zero by $x_{1}$.
   By our assertion and Lemma \ref{lemfirst},
    $\theta'(x)>0$ in $(0, x_1)$ and $\theta''(x_1)<0$, thus $\theta'(x)<0$ in $(x_1, \rho]$.
    In particular, $\theta'(x)<0$ for $x>\rho$ and $x$ is close to $\rho$.
    If $\theta'$ has no zero in $[\rho, 1)$, that means
     $\theta'(x)<0$ for $x\in [\rho, 1)$, then we complete the proof of  Lemma \ref{once}.
    Hence, it remains to rule out the possibility that
    $\theta'$ also has one zero in $[\rho, 1)$,
    which we denote as $x_3$. By Lemma \ref{lemfirst},
   $\theta'<0$ in $(x_3, 1)$ and $\theta''(x_3)<0$, furthermore, $\theta'>0$ in $[\rho, x_3)$.
    However, this contradicts the fact that $\theta'(x)<0$ for $x>\rho$ when $x$
    is close to $\rho$.

    Similarly, if $\theta'$ has a zero in $[\rho,1)$, we can obtain the same conclusion.

    In summary, if $\theta'$ has at least one zero in $(0, 1)$,
    then it is unique (denoted by $x_1$):
    $\theta'>0$ in $(0, x_1)$, $\theta'<0$
    in $(x_1, 1)$ and $\theta''(x_1)<0$.
  This completes the proof.
    \end{proof}
%
%
%

\begin{lemma}\label{prepare}
	Suppose $m(x)$ satisfies {\rm\ref{conditions}}. If $\theta$ satisfies
	$$\theta'(x)\geq, \not\equiv 0, x\in(0,\eta), \quad \theta'(x)\leq,\not\equiv 0, x\in(\eta, 1)$$
	for some $\eta\in [0,1]$. Then  $\theta_{\mu}$ satisfies
	$$\theta_{\mu}(\eta)< 0.$$
	\end{lemma}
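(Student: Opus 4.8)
The plan is to argue by contradiction from $\theta_{\mu}(\eta)\ge 0$, using the identity \eqref{equ32} and its integrated form \eqref{equationa}. First note that the sign hypotheses make $\eta$ a global maximum point of $\theta$, so $\theta(\eta)=\max_{[0,1]}\theta$, and that $\theta'(\eta)=0$: at an interior $\eta$ this follows from continuity of $\theta'$ (which is $\ge 0$ to the left and $\le 0$ to the right of $\eta$), and at $\eta\in\{0,1\}$ it is the Neumann condition. Recall that \eqref{equ32} reads $\mu(\theta_{\mu}'\theta-\theta'\theta_{\mu})'=-\theta\theta''+\theta^{2}\theta_{\mu}$, and that integrating it over $(0,1)$ and using $\theta'=\theta_{\mu}'=0$ at the endpoints gives \eqref{equationa}: $\int_{0}^{1}\theta^{2}\theta_{\mu}\,dx=-\int_{0}^{1}(\theta')^{2}\,dx$. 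Since $m$ is non-constant, $\theta$ is non-constant, so $\int_{0}^{1}(\theta')^{2}\,dx>0$ and $\theta_{\mu}$ is negative on a set of positive measure; in particular $\theta_{\mu}$ cannot be $\ge 0$ throughout $[0,1]$.

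Assume first $\theta_{\mu}(\eta)>0$, and let $(a,b)$ be the connected component of $\{x\in[0,1]:\theta_{\mu}(x)>0\}$ containing $\eta$, so $0\le a<\eta<b\le 1$; if $a>0$ then $\theta_{\mu}(a)=0$, $\theta_{\mu}'(a)\ge 0$, and if $b<1$ then $\theta_{\mu}(b)=0$, $\theta_{\mu}'(b)\le 0$. If $(a,b)=(0,1)$, then $\theta_{\mu}\ge 0$ on $[0,1]$, which has just been excluded. Otherwise, integrating \eqref{equ32} over $(a,b)$ and integrating $-\theta\theta''$ by parts gives
\[
\mu\big[\theta_{\mu}'\theta-\theta'\theta_{\mu}\big]_{a}^{b}=-\theta(b)\theta'(b)+\theta(a)\theta'(a)+\int_{a}^{b}(\theta')^{2}\,dx+\int_{a}^{b}\theta^{2}\theta_{\mu}\,dx .
\]
The left-hand side is $\le 0$: at an endpoint in $\{0,1\}$ every term vanishes by the Neumann condition, while at an interior endpoint $p\in\{a,b\}$ we have $\theta_{\mu}(p)=0$, so the bracket reduces to $\mu\theta_{\mu}'(p)\theta(p)$, which is $\ge 0$ at $p=a$ and $\le 0$ at $p=b$. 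On the right-hand side, $-\theta(b)\theta'(b)\ge 0$ because $\theta'\le 0$ on $(\eta,1)$, $\theta(a)\theta'(a)\ge 0$ because $\theta'\ge 0$ on $(0,\eta)$, $\int_{a}^{b}(\theta')^{2}\,dx\ge 0$, and $\int_{a}^{b}\theta^{2}\theta_{\mu}\,dx>0$ since $\theta_{\mu}>0$ on the nonempty interval $(a,b)$; hence the right-hand side is strictly positive, a contradiction. Thus $\theta_{\mu}(\eta)\le 0$.

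It remains to exclude $\theta_{\mu}(\eta)=0$. If $\theta_{\mu}$ is positive on one side of $\eta$, the same computation over the corresponding one-sided positivity interval yields a contradiction, the boundary contributions at $\eta$ being favorable because there $\theta'(\eta)=\theta_{\mu}(\eta)=0$. The remaining possibility — $\theta_{\mu}\le 0$ on a full neighborhood of $\eta$, so that $\eta$ is a local maximum of $\theta_{\mu}$ with value $0$ — is, I expect, the main obstacle. Evaluating \eqref{equation31} at $\eta$ forces $\theta''(\eta)=0$, hence $m(\eta)=\theta(\eta)$ by \eqref{EQmain}, and differentiating \eqref{EQmain} (using that $\eta$ is an extremum of $\theta$) forces $m'(\eta)=0$ as well; one then rules this configuration out by a finer analysis of the ODE near $\eta$ — for example through the strong maximum principle for $w\mapsto\mu w''+(m-2\theta)w$, whose zeroth-order coefficient equals $-\theta(\eta)<0$ at $\eta$, together with unique continuation — forcing $\theta_{\mu}$ to be positive on at least one side of $\eta$ and thereby reducing to the previous case.
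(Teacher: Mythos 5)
Your handling of the non-degenerate cases is correct and is essentially the paper's cases (i) and (ii) reorganized via connected components: when the positivity set of $\theta_{\mu}$ has $\eta$ in its closure, integrating \eqref{equ32} over that component makes every boundary term favourable precisely because the left endpoint lies in $[0,\eta]$ and the right endpoint in $[\eta,1]$, and $\int\theta^{2}\theta_{\mu}>0$ gives the contradiction. The problem is the degenerate case $\theta_{\mu}(\eta)=\theta_{\mu}'(\eta)=0$ with $\theta_{\mu}\le 0$ near $\eta$, which you correctly flag as the main obstacle but do not actually resolve, and the tools you propose do not close it. The facts you extract there ($\theta''(\eta)=0$, $m(\eta)=\theta(\eta)$, $m'(\eta)=0$) are consistent with {\rm\ref{conditions}} and yield no contradiction by themselves. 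The strong maximum principle does not apply to $\mu w''+(m-2\theta)w=-\theta''$ near $\eta$ because the forcing $-\theta''=\theta(m-\theta)/\mu$ has no definite sign there (indeed $m-\theta$ vanishes at $\eta$ in this configuration), so one cannot conclude that $\theta_{\mu}$ is either identically zero or positive on a one-sided interval at $\eta$. Moreover, even if you could show that $\theta_{\mu}$ takes positive values at points arbitrarily close to $\eta$, that would not reduce to your previous case: your integral argument requires the positivity component $(a,b)$ to abut $\eta$, since for $\eta<a$ the term $\theta(a)\theta'(a)$ (and symmetrically $-\theta(b)\theta'(b)$ for $b<\eta$) acquires the wrong sign. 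There is also a small logical gap in your trichotomy: ``$\theta_{\mu}$ positive on one side of $\eta$'' and ``$\theta_{\mu}\le 0$ on a neighborhood of $\eta$'' are not exhaustive unless the former is weakened to positivity at points accumulating at $\eta$, which, as just noted, is not enough for the integration.

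The missing ingredient is exactly what the paper supplies with Lemma \ref{gelem}: by variation of constants, the solution of $\mu v''+(m-2\theta)v=-\theta''$ with $v(\eta)=v'(\eta)=0$ is represented as $v(x)=\int_{\eta}^{x}h(y)G(x,y)\,dy$ with $h=-\theta'/\mu\ge 0$ on $[\eta,1]$ and $G>0$ near the diagonal at $\eta$, whence $\theta_{\mu}\ge 0$, $\not\equiv 0$ on some interval $(\eta,x_{1})$ abutting $\eta$. Only then does the integral identity over $(\eta,x_{2})$ (with $x_{2}$ the first subsequent zero of $\theta_{\mu}$) produce the contradiction; the paper then concludes by showing that otherwise $\theta_{\mu}\equiv 0$, hence $\theta''\equiv 0$ and $\theta'\equiv 0$, on one of $[0,\eta]$, $[\eta,1]$, contradicting the hypothesis. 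Without this Green's-function (or an equivalent) argument for the doubly degenerate case, your proof is incomplete.
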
	
	\begin{proof}
		
We argue by contradiction and suppose that $\theta_{\mu}(\eta)\geq0$.
			
\medskip
\noindent{\textbf{Claim}.}  $\theta_{\mu}'(\eta )\geq0$ implies that $\theta_{\mu}\equiv0$ in $[\eta ,1]$.
\medskip
			
			Otherwise, we assume that $\theta_{\mu}'(\eta )\geq0$ and $\theta_{\mu}$ does not vanish completely in $[\eta ,1]$.
			By integrating \eqref{equ32} over $[\eta ,1]$, we deduce that
			$$\int_{\eta }^{1}\theta^2\theta_{\mu}dx=-\mu\theta_{\mu}'(\eta )\theta(\eta )-\int_{\eta }^{1}(\theta')^{2}dx\leq0.$$
			Combing this with the assumption that $\theta_{\mu}\not\equiv0$ in $[\eta ,1]$, we conclude that $\theta_{\mu}$ must be negative at some point in $[\eta ,1]$.
			
			To determine the sign of $\theta_{\mu}$, we consider three cases: (i) $\theta_{\mu}(\eta )>0$; (ii) $\theta_{\mu}(\eta )=0<\theta_{\mu}'(\eta )$; (iii) $\theta_{\mu}(\eta)=0= \theta_{\mu}'(\eta )$.
			In the third case, applying Lemma \ref{gelem} to $\theta_{\mu}$-equation with the fact that $\theta_{\mu}\not\equiv0$, $\theta'\leq0$ in $[\eta ,1]$, we derive that
			\begin{equation}\label{LiLouL59}
			\theta_{\mu}(x)\geq0, x\in(\eta ,x_{1})\mbox{ and }\theta_{\mu}(x_{1})>0\mbox{ for some }x_{1}\in(\eta ,1].
			\end{equation}
			It is obvious that \eqref{LiLouL59} also holds true in case (i) or case (ii). Therefore, there
 exists some $x_{2}\in(\eta ,1)$ such that
			$$\theta_{\mu}\geq,\not\equiv0, \mbox{ in }(\eta ,x_{2})\mbox{ and }\theta_{\mu}(x_{2})=0.$$
			Hence $\theta_{\mu}'(x_{2})\leq0$. Now integrating \eqref{equ32} over $[\eta ,x_{2}]$, we get
			$$0\ge \mu[\theta_{\mu}'(x_{2})\theta(x_{2})-\theta_{\mu}'(\eta )\theta(\eta )]=-\theta'(x_{2})\theta(x_{2})+\int_{\eta }^{x_{2}}[(\theta')^{2} + \theta^2\theta_{\mu}]dr>0.$$
			This is impossible. Thus the claim is proved.
			
			Working on the interval $(0,\eta )$ similarly, one can prove that if $\theta_{\mu}'(\eta )\leq0$, then $\theta_{\mu}\equiv0$ in $[0,\eta ]$.
From these assertions
 and  $\theta_{\mu}$-equation \eqref{equation31} we have $\theta''\equiv0$ in $[0, \eta]$ or $[\eta, 1]$.
 As $\theta'(0)=\theta'(1)=0$,  $\theta'\equiv0$ in $[0, \eta]$ or $[\eta, 1]$,
which  contradicts the assumption. 
		\end{proof}
	
\medskip

\noindent{\textbf{Proof of Theorem \ref{deincreasing}.}}
Given any $\bar{\mu}>0$, by Lemmas \ref{once} and \ref{prepare} we can conclude that
	\begin{equation}\label{LiLou33}
	\theta_{\mu}(\bar{x};\bar{\mu})<0
\end{equation}
 holds for $\bar{x}$ satisfying $\theta(\bar{x},\bar{\mu})=M(\bar{\mu})$.
 The rest of the proof is similar to that of Theorem \ref{doublem}.
 \qed

\section{\bf Discussions}

In this paper, as motivated by the investigation of a predator-prey model in heterogeneous
environments \cite{LouYuan-WangBiao}, we studied whether the maximum of the unique
solution of \eqref{EQmain} is a monotone decreasing function of the diffusion rate.
For several classes of resource functions we proved that this is indeed the case.
In contrast,
the minimum of the unique
solution of \eqref{EQmain}
is not always  monotone increasing in the diffusion rate for
general resource functions
\cite{HeXiaoqing-NiWeiMing2016}. In fact, it is
quite curious that for large diffusion rate
it could occur that the density of the population is greater than
the average of the resource function everywhere in the whole habitat.

A probably interesting and related question is whether $\int_\Omega \theta^p(x; \mu)\,dx$
is strictly monotone decreasing in $\mu$.
That is, if $\int_\Omega \theta^p(x; \mu)\,dx$ were monotone decreasing in $\mu$
for all large $p$, then $\max_{\bar\Omega}\theta$ is also monotone decreasing in $\mu$
by applying the well-known
limit $\max_{\bar\Omega} \theta=\lim_{p\to \infty} \|\theta\|_{L^p(\Omega)}$.

For $p=1$,  it is shown in \cite{lou-2006}
that for the unique solution $\theta(x; \mu)$
of \eqref{EQmain}, the total biomass, given by the integral $\int_\Omega \theta(x; \mu)\, dx$,
is generally not a monotone function of the diffusion rate $\mu$.
In fact, it is possible to construct examples such that $\int_\Omega \theta(x; \mu)\, dx$ has multiple
critical points \cite{LandL}.
We refer to \cite{Bai2014, DeAngelis2015, NY} and references therein for more recent developments.

While it is unknown whether $\int_\Omega \theta^p(x; \mu)\,dx$ is monotone decreasing in $\mu$
for general $p$, the answer is affirmative for $p=3$ as shown in the following result:

\begin{lemma} $\int_\Omega \theta^3(x; \mu)\, dx$ is strictly monotone decreasing in $\mu$.
\end{lemma}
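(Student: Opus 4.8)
The plan is to differentiate under the integral sign and reduce everything to the basic $\theta$-- and $\theta_{\mu}$--equations. Since $\theta(\cdot\,;\mu)$ depends smoothly on $\mu$ (the linearized operator $\mu\Delta+(m-2\theta)$ subject to the Neumann condition has trivial kernel, because $\theta$ is the unique positive solution, so the implicit function theorem applies exactly as in the proof of Lemma~\ref{lem400}), we may write
$$
\frac{d}{d\mu}\int_{\Omega}\theta^{3}(x;\mu)\,dx=3\int_{\Omega}\theta^{2}\theta_{\mu}\,dx ,
$$
so it suffices to show $\int_{\Omega}\theta^{2}\theta_{\mu}\,dx<0$ for every $\mu>0$.

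The key step is the $N$-dimensional analogue of the identity \eqref{equationa}, namely
$$
\int_{\Omega}\theta^{2}\theta_{\mu}\,dx=-\int_{\Omega}|\nabla\theta|^{2}\,dx .
$$
To obtain it, I would multiply the $\theta_{\mu}$-equation \eqref{equation31-0} by $\theta$ and integrate over $\Omega$, multiply \eqref{EQmain} by $\theta_{\mu}$ and integrate over $\Omega$, and then subtract the two resulting identities. Using Green's formula together with the Neumann boundary conditions satisfied by both $\theta$ and $\theta_{\mu}$, the second-order terms $\mu\int_{\Omega}\theta\Delta\theta_{\mu}$ and $\mu\int_{\Omega}\theta_{\mu}\Delta\theta$ cancel; the zeroth-order terms combine to $\int_{\Omega}(m-2\theta)\theta\theta_{\mu}-\int_{\Omega}(m-\theta)\theta\theta_{\mu}=-\int_{\Omega}\theta^{2}\theta_{\mu}$; and the right-hand side contributes $-\int_{\Omega}\theta\Delta\theta=\int_{\Omega}|\nabla\theta|^{2}$. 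Rearranging yields the displayed identity.

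Finally, since $m$ is non-constant, $\theta$ cannot be constant (a positive constant $c$ would force $m\equiv c$), so $\int_{\Omega}|\nabla\theta|^{2}\,dx>0$, and therefore $\frac{d}{d\mu}\int_{\Omega}\theta^{3}\,dx=-3\int_{\Omega}|\nabla\theta|^{2}\,dx<0$ for all $\mu>0$. This gives strict monotonicity.

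I do not expect a serious obstacle: the only points requiring a word of care are the smooth dependence of $\theta$ on $\mu$, which is routine, and the bookkeeping in the integration by parts. It is worth observing why the exponent $3$ is special for this argument: for $\int_{\Omega}\theta^{p}$ the natural weight is $\theta^{p-1}$, which equals $\theta^{2}$ precisely when $p=3$, and that is exactly what makes the cross terms telescope upon subtracting the two weighted equations; for other values of $p$ the analogous manipulation does not close, which is consistent with the fact that monotonicity for general $p$ is left open in the discussion above.
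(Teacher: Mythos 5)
Your proposal is correct and follows essentially the same route as the paper: multiply the $\theta_\mu$-equation by $\theta$ and \eqref{EQmain} by $\theta_\mu$, subtract, integrate, and use the Neumann conditions to obtain $\int_\Omega \theta^2\theta_\mu = -\int_\Omega |\nabla\theta|^2 < 0$. The sign bookkeeping checks out, and your closing remark about why $p=3$ is special is a nice observation consistent with the discussion in the paper.
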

 \begin{proof} Differentiating \eqref{EQmain} with respect to $\mu$ we have
 \begin{equation}\label{EQmain-1}\begin{cases}
\mu \Delta \theta_\mu+ \theta_\mu(m(x)-2\theta)=-\Delta\theta \quad \mbox{in}\ \Omega,\\
\frac{\partial\theta_\mu}{\partial n}=0\quad  \mbox{on} \ \partial \Omega.
\end{cases}\end{equation}

 Multiplying \eqref{EQmain-1} by $\theta$, \eqref{EQmain} by $\theta_\mu$ and
 subtracting, and integrating the result in $\Omega$
 we have
 $$
 \int_\Omega \theta^2 \theta_\mu=\int_\Omega \theta \Delta\theta
 =-\int_\Omega |\nabla \theta|^2.
 $$
 Therefore
 $$
 \frac{d}{d\mu}\int_\Omega \theta^3
 =3\int_\Omega \theta^2 \theta_\mu
 =-3\int_\Omega |\nabla \theta|^2<0,
 $$
where the last inequality follows
from the fact that $\theta$ is non-constant.
 \end{proof}

It will be of interest to see whether for general resource function $m$,
$\int_\Omega \theta^p(x; \mu)\,dx$ is monotone decreasing in $\mu$
for $p\ge 3$. 

\bigskip

\noindent{\bf Acknowledgment.} The authors would like
to thank two anonymous referees for their helpful comments
which improve the presentation of the paper.
The authors are grateful to Prof. Bei Hu for his helpful comments and suggestions.
RL is sponsored by the China Scholarship Council
and she wishes to thank the Department of Applied Computational Mathematics and Statistics of the University of Notre Dame for the warm hospitality during her visit.
RL is partially supported by NSFC grants No. 11571364 and 11571363
and YL is partially supported by
 NSF grant DMS-1411476.


\begin{thebibliography}{99}
	
\bibitem{Averill-Lam-LouYuan} I. Averill, K.-Y. Lam, Y. Lou,
The role of advection in a two-species competition model: a bifurcation approach.
Mem. Amer. Math. Soc. 245 (2017), v+117.

\bibitem{Bai2014}
X.L. Bai, X.Q. He, F. Li,
An optimization problem and its application in population dynamics,
{Proc. Amer. Math. Soc.} {144} (2016), 2161-2170.

	\bibitem{CaHo} R.G. Casten, C.J. Holland,
 Instability results for reaction diffusion equations with Neumann boundary conditions. J. Differential Equations 27 (1978), 266-273.

\bibitem{CC1989}
R.S. Cantrell, C. Cosner, Diffusive logistic equations with
indefinite weights: population models in a disrupted environments.
{Proc. Roy. Soc. Edinburgh} {112A} (1989),  293-318.

\bibitem{CC1991}
R.S. Cantrell, C. Cosner, {The effects of spatial heterogeneity
in population dynamics}. {J.\ Math.\ Biol.} {29} (1991),
315-338.


\bibitem{CC1993}
 R.S. Cantrell, C. Cosner,
{Should a park be an island?} { SIAM J. Appl. Math.} {53}
(1993), 219-252.



\bibitem{CC1998}
 R.S. Cantrell, C. Cosner,
{On the effects of spatial heterogeneity on the persistence of
interacting species}. {J.\ Math.\ Biol.} {37} (1998),
103-145.


	\bibitem{Cantrell-Cosner} R.S. Cantrell, C. Cosner,
Spatial Ecology via Reaction-Diffusion Equations. Wiley Series in Mathematical and Computational Biology.
John Wiley $\&$ Sons, Ltd., Chichester, 2003.

\bibitem{DeAngelis2015}
D. DeAngelis, W.-M. Ni, B. Zhang,
Dispersal and spatial heterogeneity: single species.
{J. Math. Biol.} {72} (2016), 239-254.

\bibitem{JHe}
J.Y. He,  Stability of semi-trivial solutions for
a predator-prey model in heterogeneous
environment. M.S. Thesis (in Chinese), East China Normal University, April 2018.

\bibitem{HLLN}
X.Q. He,  K.-Y.Lam, Y. Lou, W.-M. Ni,
Dynamics of a consumer-resource reaction-diffusion model:
homogeneous vs. heterogenous environments, J. Math. Biol.,
under revision.

\bibitem{HeNi1}
X.Q. He, W.-M. Ni, The effects of diffusion and spatial variation in Lotka-Volterra competition-diffusion system I: Heterogeneity
vs. homogeneity. {J. Differential Equations} {254} (2013), 528-546.

\bibitem{HeNi2}
X.Q. He, W.-M. Ni, The effects of diffusion and spatial variation in Lotka-Volterra competition-diffusion system II: The general case.
{J. Differential Equations} {254} (2013), 4088-4108.

\bibitem{HeNi3}
X.Q. He, W.-M. Ni, Global dynamics of the Lotka-Volterra competition-diffusion system: Diffusion and spatial heterogeneity I.
{Comm. Pure. Appl. Math.} {69} (2016), 981-1014.

\bibitem{HeXiaoqing-NiWeiMing2016} X.Q. He, W.-M. Ni,
Global dynamics of the Lotka-Volterra competition-diffusion system with equal amount of total resources, II. Calc. Var. Partial Differential Equations 55 (2016), Art. 25, 20 pp.

	\bibitem{LandL} S. Liang, Y. Lou,
On the dependence of population size upon random dispersal rate.
Discrete Contin. Dyn. Syst. Ser. B 17 (2012), 2771-2788.

	\bibitem{lou-2006} Y. Lou,
 On the effects of migration and spatial heterogeneity on single and multiple species. J. Differential Equations 223 (2006), 400-426.

\bibitem{lou2007} Y. Lou,
Some challenging mathematical problems in evolution of dispersal and
population dynamics, Pp.171-205 in: Friedman, A. (Ed.), Tutor. Math.
Biosci. vol IV: Evolution and Ecology, Lect. Notes Mathematics Vol.
1922, Springer, 2007.

\bibitem{lou2015}  Y. Lou,
 Some reaction diffusion models in spatial ecology. Scientia Sinica Mathematica 45 (2015), 1619-1634.

	\bibitem{LouYuan-WangBiao} Y. Lou, B. Wang,  Local dynamics of a diffusive predator-prey model in spatially heterogeneous environment. J. Fixed Point Theory Appl. 19 (2017), 755-772.

\bibitem{Ni2011} W.-M. Ni, {The Mathematics of Diffusion}. CBMS-NSF Regional Conf. Ser. in Appl. Math. 82, SIAM,
Philadelphia, 2011.

\bibitem{NY} K. Nagahara, E. Yanagida,
Maximization of the total population in a
reaction-diffusion model with logistic growth.
 Calc. Var. Partial Differential Equations (2018) 57: 80.

 	\bibitem{Matano} H. Matano,
Asymptotic behavior and stability of solutions of semilinear diffusion equations.
Publ. Res. Inst. Math. Sci. 15 (1979), 401-454.
	
 \bibitem{PW} 
M.H. Protter, H.F. Weinberger,
Maximum Principles in Differential Equations, 2nd ed.,
Springer-Verlag, Berlin, 1984.

\end{thebibliography}
\end{document}